\newtheorem{question}{Question}
\newtheorem{theorem}{Theorem}[section]
\newtheorem{prop}[theorem]{Proposition}
\newtheorem{cor}[theorem]{Corollary}
\theoremstyle{definition}
\newtheorem{definition}[theorem]{Definition}
\newtheorem{example}[theorem]{Example}
\newtheorem{remark}[theorem]{Remark}
\newtheorem{conj}{Conjecture} 
\theoremstyle{remark}
\newcommand{\Q}{{\mathbb Q}}
\newcommand{\Z}{{\mathbb Z}}
\newcommand{\PP}{{\mathbb P}}
\newcommand{\calK}{\mathcal K}
\newcommand{\calO}{\mathcal O}
\newcommand{\calM}{\mathcal M}
\newcommand{\calF}{\mathcal F}
\newcommand{\calA}{\mathcal A}
\newcommand{\calE}{\mathcal E}
\newcommand{\calU}{\mathcal U}
\newcommand{\calX}{\mathcal X}
\newcommand{\calC}{\mathcal C}
\newcommand{\calS}{\mathcal S}
\newcommand{\Spec}{\mathrm{Spec}}
\newcommand{\Proj}{\mathrm{Proj}}
\newcommand{\mc}[1]{\mathcal{#1}}
\newcommand{\mb}[1]{\mathbb{#1}}
\title{Moduli of fibered surface pairs from twisted stable maps}
\author[Ascher]{Kenneth Ascher}
\email{kenneth\_ascher@brown.edu}
\author[Bejleri]{Dori Bejleri}
\email{dbejleri@math.brown.edu}
\subjclass[2010]{14J10, 14D23}
\begin{document}

\maketitle
\begin{abstract} In this paper, we use the theory of twisted stable maps to construct compactifications of the moduli space of pairs $(X \to C, S + F)$ where $X \to C$ is a fibered surface, $S$ is a sum of sections, $F$ is a sum of marked fibers, and $(X,S+F)$ is a stable pair in the sense of the minimal model program. This generalizes the work of Abramovich-Vistoli, who compactified the moduli space of fibered surfaces with no marked fibers.  Furthermore, we compare our compactification to Alexeev's space of stable maps and the KSBA compactification.  As an application, we describe the boundary of a compactification of the moduli space of elliptic surfaces.    \end{abstract}

\section{Introduction}

A fibered surface is a flat proper morphism $f: X \to C$ from a smooth projective surface to a smooth projective curve with sections $\sigma_1, \ldots, \sigma_\nu$, and generic fiber a stable $\nu$-pointed curve of genus $\gamma$. As surfaces fibered in curves appear naturally (e.g. elliptic surfaces), it is natural to ask for geometric compactifications of their moduli. Abramovich and Vistoli \cite{av} used the theory of twisted stable maps to Deligne-Mumford stacks to construct a compactification $\calK_{g}(\overline{\calM}_{\gamma,\nu})$ of the moduli space of fibered surfaces.

In this paper we use the general theory of twisted stable maps developed in \cite{av2} to extend their results in \cite{av} to the pairs case, and construct compactifications of the moduli space of pairs $(f: X \to C, S + F)$ where $(X \to C, \sigma_1, \ldots, \sigma_\nu)$ is a fibered surface,
$$
S + F = \sum_{i =1}^\nu S_i + \sum_{j = 1}^n F_j
$$
is a sum of sections $S_i := \sigma_i(C)$ and marked fibers $F_j$ with their reduced structure, and $(X, S + F)$ is a stable pair in the sense of the minimal model program (see Definition \ref{def:stablepair}). Using this approach, we are able to describe the stable reduction process (see Theorem \ref{thm:introelliptic}) in a more straightforward manner than solely using techniques from the minimal model program (MMP).

Our first theorem generalizes Propositions 6.10 and 6.13 of \cite{av}. See also Corollary 1.8.2 of \cite{abramovich}.

\begin{theorem}[See Theorem \ref{thm:mainthm} and Corollary \ref{cor:morphism}] \label{theorem1}There exists a morphism 
$$
\varphi: \calK_{g,n}(\overline{\calM}_{\gamma, \nu}) \to \calA_{\bf{v}}(\overline{M}_{\gamma,\nu})
$$
from the space of twisted stable maps with target $\overline{\calM}_{\gamma, \nu}$ to the space of Alexeev stable maps with target  $\overline{M}_{\gamma,\nu}$.  

\end{theorem}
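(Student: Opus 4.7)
The plan is to construct $\varphi$ on families and then verify functoriality, so that it assembles into a morphism of moduli stacks.

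Given an $S$-point of $\calK_{g,n}(\overline{\calM}_{\gamma,\nu})$, that is, a representable twisted stable map $f: \mathcal{C} \to \overline{\calM}_{\gamma,\nu}$ from an $n$-pointed twisted curve of genus $g$ over $S$, I would first pass to coarse moduli to obtain a flat family of $n$-pointed nodal curves $C/S$ with marked sections $p_1, \ldots, p_n$ coming from the twisted markings. Post-composing $f$ with the coarse moduli morphism $\overline{\calM}_{\gamma,\nu} \to \overline{M}_{\gamma,\nu}$ and descending through the coarse moduli map of the source gives a morphism $\overline{f}: C \to \overline{M}_{\gamma,\nu}$. The resulting data $(C/S, \sum b_i p_i, \overline{f})$, for appropriate weights $b_i \in (0,1] \cap \Q$ dictated by the orders of the stabilizers at the twisted markings, is the candidate Alexeev stable map.

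Next I would verify the Alexeev stability condition: that $(C, \sum b_i p_i)$ is slc and that $\omega_{C/S}(\sum b_i p_i) \otimes \overline{f}^*\calL^{\otimes k}$ is relatively ample for a fixed ample polarization $\calL$ on $\overline{M}_{\gamma,\nu}$ and some $k > 0$. Ampleness should follow from the Kontsevich stability built into the definition of $\calK_{g,n}(\overline{\calM}_{\gamma,\nu})$ after tracking the ramification of the coarse moduli map along twisted nodes and markings of $\mathcal{C}$. The discrete invariant $\mathbf{v}$, which records the boundary weights together with the $\calL$-degree of $\overline{f}$, is then locally constant in $S$, so that the construction lands in a single component $\calA_{\mathbf{v}}(\overline{M}_{\gamma,\nu})$.

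The main technical obstacle I anticipate is the bookkeeping at the twisted points: determining boundary coefficients $b_i$ that both correctly reflect the stabilizer orders of the twisted markings and certify that $(C, \sum b_i p_i)$ is slc, while simultaneously converting the relative ampleness that witnesses Kontsevich stability on $\mathcal{C}$ into the Alexeev ampleness condition on $C$. Once this is settled, functoriality of the coarse moduli construction in families is automatic and produces the natural transformation of groupoids which, by descent, is the morphism of stacks $\varphi$; the corollary then extracts the induced morphism on coarse moduli spaces.
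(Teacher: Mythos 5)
Your construction targets the wrong objects, so the proposal does not prove the statement. In this paper $\calA_{\mathbf{v}}(\overline{M}_{\gamma,\nu})$ is Alexeev's space of stable maps from slc \emph{surface} pairs $(X,D)$ with $\omega_X(D)$ relatively ample and numerical data $\mathbf{v}$ given by intersection numbers on the surface (Definition \ref{def:stablemap}, Theorem \ref{alexeev}); it is not a space of weighted pointed curves. The morphism $\varphi$ must therefore send a twisted stable map $\psi:(\calC,\Sigma_i^\calC)\to\overline{\calM}_{\gamma,\nu}$ to a surface: one pulls back the universal family $\calU\to\overline{\calM}_{\gamma,\nu}$ along $\psi$ to obtain a stack-like fibered surface $\calX\to\calC$ with sections $\calS_i$, and then takes coarse spaces to get a fibered surface pair $(f:X\to C,\ \sum S_i+\sum F_j)$ with reduced marked fibers $F_j=f^{-1}(\Sigma_j^C)_{red}$ and an induced map $X\to C\to\overline{M}_{\gamma,\nu}$. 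Your sketch never pulls back the universal family and never produces a surface; the map $\overline{f}:C\to\overline{M}_{\gamma,\nu}$ with its marked points is just the underlying Kontsevich stable map, which is already part of the data defining a twisted stable map, and the fractional weights $b_i$ you introduce have no role here (the Alexeev space used requires a reduced boundary divisor).

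Consequently the real content of the theorem is absent from your argument. What has to be proved is (i) that the coarse pair $(X,\sum S_i+\sum F_j)$ is slc, which the paper does \'etale-locally by presenting $X$ as a finite quotient of a family of stable curves over a marked nodal curve, invoking Lemmas 6.4 and 6.6 and Proposition 6.10 of \cite{av}, and using that $X$ has finite quotient singularities, hence is $\Q$-factorial, so that the marked fibers are $\Q$-Cartier; and (ii) that $\omega_X(S+F)\otimes\mu^*H^k$ is ample for the induced map $\mu:X\to\overline{M}_{\gamma,\nu}$, which is checked after pulling back along the finite coarse map $\calX\to X$, where it factors as $\omega_g(\calS)\otimes g^*\pi^*(\omega_C(\Sigma^C)\otimes l^*H^k)$, combining relative ampleness of $\omega_g(\calS)$ for the family of stable pointed curves with ampleness of $\omega_C(\Sigma^C)\otimes l^*H^k$ coming from stability of the underlying Kontsevich map. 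One also needs that taking coarse spaces of these tame stacks commutes with arbitrary base change, so the construction is functorial in families (Corollary \ref{cor:coarsespace}). None of this can be recovered from the curve-level bookkeeping in your proposal, so there is a genuine gap: the object you feed into $\calA_{\mathbf{v}}(\overline{M}_{\gamma,\nu})$ is not of the required type, and the surface-level slc and positivity verifications are missing entirely.
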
 

The space $\calA_{\bf{v}}(V)$ of Alexeev stable maps to a projective scheme $V$ is a higher dimensional analogue of the moduli space of Kontsevich stable maps $\overline{\calM}_{g,n}(V)$. It parametrizes maps $g : (X,D) \to V$ from a semi-log canonical (slc) surface pair $(X,D)$ such that $\omega_X(D)$ is a $g$-ample $\Q$-line bundle with numerical data $\bf{v}$ as constructed in \cite{mgnw} (see Theorem \ref{alexeev} for more details). We fix the degree of the twisted stable map throughout, which we suppress from the notation;
$\bf{v}$ depends on this choice of degree. Finally, we note that the fact that a twisted stable map gives rise to an Alexeev stable map was essentially proven in Corollary 1.8.2 of \cite{abramovich}. \\

We identify the surface pairs in the image of the morphism $\varphi$ and call these \emph{twisted surfaces} (see Definition \ref{def:twistedsurface}). As a consequence of properness for the moduli space of twisted stable maps, we obtain the following. 

\begin{theorem}[See Theorem \ref{thm:thm2}]\label{thm:2} There exists a proper Deligne-Mumford stack $\calF^{\bf{v}}_{g,n}(\overline{M}_{\gamma,\nu})$ with projective coarse moduli space parameterizing pairs $(X \to C \to \overline{M}_{\gamma,\nu}, S + F)$ where:
\begin{enumerate} \item $(X \to C, S + F)$ is a twisted fibered surface, and 
\item $(X, S + F) \to \overline{M}_{\gamma,\nu}$ is an Alexeev stable map with numerical data $\bf{v}$. \end{enumerate} The map $\varphi$ factors as $\calK_{g,n}(\overline{\calM}_{\gamma,\nu}) \to \calF^{\bf{v}}_{g,n}(\overline{M}_{\gamma,\nu}) \to \calA_{\bf{v}}(\overline{M}_{\gamma,\nu})$ where the first map is surjective and the second forgets the fibration $X \to C$. \end{theorem}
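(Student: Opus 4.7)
The plan is to define $\calF^{\bf{v}}_{g,n}(\overline{M}_{\gamma,\nu})$ directly as a moduli stack sitting between $\calK_{g,n}(\overline{\calM}_{\gamma,\nu})$ and $\calA_{\bf{v}}(\overline{M}_{\gamma,\nu})$. Specifically, I would let its $T$-points be data $(f \colon X \to C, S+F, \phi \colon C \to \overline{M}_{\gamma,\nu})$ where $X \to T$ is a flat family of surfaces, $(X \to C, S+F)$ is a twisted fibered surface in the sense of Definition \ref{def:twistedsurface}, and the composition $X \to C \xrightarrow{\phi} \overline{M}_{\gamma,\nu}$ together with the divisor $S+F$ defines an Alexeev stable map of numerical class $\bf{v}$. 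Morphisms are isomorphisms of the fibered pair compatible with the map to $\overline{M}_{\gamma,\nu}$.

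With this definition, the two forgetful morphisms are natural. The map $\calK_{g,n}(\overline{\calM}_{\gamma,\nu}) \to \calF^{\bf{v}}_{g,n}$ sends a twisted stable map $\calX \to \calC \to \overline{\calM}_{\gamma,\nu}$ to its coarse moduli $X \to C \to \overline{M}_{\gamma,\nu}$ together with the reduced boundary of markings; the output satisfies condition (1) by construction, and by Theorem \ref{theorem1} it also satisfies the Alexeev stability condition (2). The map $\calF^{\bf{v}}_{g,n} \to \calA_{\bf{v}}$ simply forgets the fibration structure. The factorization of $\varphi$ is then tautological, and surjectivity of the first map holds essentially by design: every object of $\calF^{\bf{v}}_{g,n}$ is, by condition (1), the coarse moduli of some twisted stable map.

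The technical content of the theorem is that $\calF^{\bf{v}}_{g,n}$ is a proper Deligne-Mumford stack with projective coarse space. For algebraicity I would realize $\calF^{\bf{v}}_{g,n}$ as a substack of a relative Hom-stack over $\calA_{\bf{v}}(\overline{M}_{\gamma,\nu})$ parameterizing factorizations of the universal Alexeev stable map through a curve $C$, cut out by the locally closed condition of being a twisted fibered surface. Automorphisms of an object of $\calF^{\bf{v}}_{g,n}$ embed into automorphisms of the underlying Alexeev stable map (since they must preserve the extra fibration structure), hence are finite by Alexeev stability, and so $\calF^{\bf{v}}_{g,n}$ is Deligne-Mumford. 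For properness, since $\calK_{g,n}(\overline{\calM}_{\gamma,\nu})$ is proper by \cite{av2} and the map $\calK_{g,n} \to \calF^{\bf{v}}_{g,n}$ is surjective on geometric points, $\calF^{\bf{v}}_{g,n}$ is universally closed. Separatedness reduces via the valuative criterion to unique extension of the fibration on an Alexeev stable limit, which can be extracted by Stein factorization of the extended map to $\overline{M}_{\gamma,\nu}$.

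Projectivity of the coarse space should then follow from Koll\'ar's ampleness criterion applied to the relative log canonical polarization on the universal family, or equivalently by descending the polarization from the projective coarse space of $\calA_{\bf{v}}$ and checking that $\calF^{\bf{v}}_{g,n} \to \calA_{\bf{v}}$ has projective (in fact representable) fibers. The step I expect to be hardest is rigorously isolating the ``twisted fibered surface'' condition as a locally closed condition on the relative Hom-stack, and verifying it is preserved under specialization; this is where we must combine the deformation theory of twisted stable maps from \cite{av, av2} with the MMP-stability comparison furnished by Theorem \ref{theorem1}. A related subtlety worth noting is that a single Alexeev stable map can a priori admit several non-isomorphic fibration structures, which is why $\calF^{\bf{v}}_{g,n}$ must remember the fibration rather than be defined merely as the stack-theoretic image of $\varphi$.
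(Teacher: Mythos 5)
Your overall architecture (embed the moduli problem in a relative Hom stack, get universal closedness from properness of $\calK_{g,n}(\overline{\calM}_{\gamma,\nu})$ together with surjectivity onto $\calF^{\bf{v}}_{g,n}(\overline{M}_{\gamma,\nu})$, and read off the factorization of $\varphi$) matches the paper, which takes the Hom stack $\mc{D}$ over $\calA_{\bf{v}}(\overline{M}_{\gamma,\nu}) \times \calK_{g,n}(\overline{M}_{\gamma,\nu})$ parameterizing triples $(X \to \overline{M}_{\gamma,\nu},\, C \to \overline{M}_{\gamma,\nu},\, X \to C)$ and identifies $\calF^{\bf{v}}_{g,n}(\overline{M}_{\gamma,\nu})$ with the closed image of $\calK_{g,n}(\overline{\calM}_{\gamma,\nu})$. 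Two of your steps, however, have genuine problems. First, surjectivity is not ``by design'': Definition \ref{def:twistedsurface} is intrinsic, and the statement that every twisted fibered surface pair (over a smooth base) is the coarse space of a stack-like surface coming from a twisted stable map is exactly Proposition \ref{prop:surj}, which must be invoked. Second, your separatedness argument is wrong as stated: Stein factorization of the extended map $X \to \overline{M}_{\gamma,\nu}$ does \emph{not} recover the fibration $X \to C$. The paper's own Abramovich--Vistoli example ($X = C \times C'$ with constant map to $\overline{M}_\gamma$), or any isotrivial fibration, shows the composite map can forget $C$ entirely. The repair is already available in your setup: uniqueness of limits should come from separatedness of the Hom stack in which $\calF^{\bf{v}}_{g,n}(\overline{M}_{\gamma,\nu})$ sits, not from reconstructing $C$ from the map to $\overline{M}_{\gamma,\nu}$.

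The larger gap is projectivity of the coarse space, which is the real content beyond formal properness. ``Descending the polarization from $\calA_{\bf{v}}(\overline{M}_{\gamma,\nu})$ and checking the fibers are projective'' does not suffice: a proper morphism with projective fibers over a projective base need not have projective total space unless one produces a relatively ample bundle, and the appeal to Koll\'ar's ampleness criterion is left entirely unexecuted. The paper instead proves that $\calF^{\bf{v}}_{g,n}(\overline{M}_{\gamma,\nu}) \to \calA_{\bf{v}}(\overline{M}_{\gamma,\nu})$ is \emph{quasi-finite}, hence finite on coarse spaces, and concludes projectivity from projectivity of the coarse space of $\calA_{\bf{v}}(\overline{M}_{\gamma,\nu})$. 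The substance of that step is a rigidity statement absent from your proposal: deformations of $f : X \to C$ fixing the pair $(X, S+F)$ are zero-dimensional, because the fibers of $f$ have trivial normal bundle and are algebraically equivalent, so they remain contracted under any deformation; hence any deformation of $f$ factors through $C$ and is induced by an automorphism of the stable pointed curve $(C,\Sigma_i^C)$, of which there are finitely many. Note that you yourself point out that one Alexeev stable map may admit several non-isomorphic fibrations, which is precisely why some finiteness argument for the fibers of the forgetful map is unavoidable; asserting the fibers are ``representable'' says nothing about their dimension.
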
 

It is an interesting question to ask if the morphism $\calF^{\bf{v}}_{g,n}(\overline{M}_{\gamma,\nu}) \to \calA_{\bf{v}}(\overline{M}_{\gamma,\nu})$ is surjective onto the irreducible components of $\calA_{\bf{v}}(\overline{M}_{\gamma,\nu})$ that it hits. This amounts to the following deformation-theoretic question. Given a \emph{normal} twisted fibered surface $(f : X \to C, S + F)$, does a small deformation of $X$ extend to a small deformation of $f : X \to C$? This question is answered in the affirmative in arbitrary dimension but without marked fibers in \cite{fsv}. \\

Now we briefly explain the idea behind using twisted stable maps to understand fibered surfaces pairs. Since the generic fiber of a fibered surface $f: X \to C$  with $\nu$ sections is a $\nu$-pointed genus $\gamma$ stable curve, there is a naturally induced rational map $C \dashrightarrow \overline{\calM}_{\gamma, \nu}$, to the moduli stack of stable curves. That is, there exists a nonempty open subset $U \subset C$ such that $X|_U \to U$ is a family of stable curves inducing a morphism $U \to \overline{\calM}_{\gamma,\nu}$. 

Since $\overline{\calM}_{\gamma,\nu}$ is a proper Deligne-Mumford stack, the rational map $C \dashrightarrow \overline{\calM}_{\gamma, \nu}$ extends to a morphism $\calC \to \overline{\calM}_{\gamma,\nu}$ from a smooth projective orbifold curve $\calC$ with marked points $\Sigma^\calC_i \subset \calC \setminus U$, where $\calC$ has coarse space $C$ containing $U$ as an open dense subset. 

We now have a fiber product diagram, where $\calU \to \overline{\calM}_{\gamma,\nu}$ represents the universal family:
$$
\xymatrix{\calX \ar[r] \ar[d] & \calU \ar[d] \\ \calC \ar[r]  & \overline{\calM}_{\gamma,\nu}}
$$
Then $f': X' \to C$, the coarse space of $\calX \to \calC$, will be a fibered surface pair birational to $f: X \to C$, with sections $S_i'$ and marked fibers $F_i$ above the marked points $\Sigma_i^C$. 

From this construction, the map $\calC \to  \overline{\calM}_{\gamma,\nu}$ induces  a birational model $(X' \to C, S + F)$ of a fibered surface pair, along with a morphism $C \to \overline{M}_{\gamma, \nu}$. Now using twisted stable maps, we compactify the space of such maps by allowing the source curve $\calC$ to degenerate to a nodal orbifold curve and obtain a morphism to $\calA(\overline{M}_{\gamma,\nu})$ by taking coarse space of the pullback of the universal family of $\overline{\calM}_{\gamma,\nu}$.
\subsection{Stable maps versus stable pairs} 

One is often interested in the space of stable surface \emph{pairs} instead of stable maps. For a \emph{twisted} fibered surface $(X \to C, S + F)$ (see Definition \ref{def:twistedsurface}) over a smooth curve $C$ such that $(X, S + F)$ is a stable pair, there is a uniquely determined map $C \to \overline{M}_{\gamma, \nu}$ by taking the coarse space of $\calC \to \overline{\calM}_{\gamma, \nu}$ as above. In this case, no information is lost by forgetting the map. However, this may no longer be true on the boundary of the moduli space since the source of a stable map may not be stable as a pair, i.e. the pair $(X, S + F)$ might not be stable. 

We let $\calF^v_{g,n}(\gamma, \nu)$ denote the compactification of the space of twisted stable fibered surface pairs $(X \to C, S + F)$ provided by the minimal model program (see Definition \ref{def:ffunctor}). 

\begin{theorem}[See Theorem \ref{prop:thm3} and Corollary \ref{cor:proper}] The moduli space $\calF^v_{g,n}(\gamma, \nu)$ is a proper Deligne-Mumford stack equipped with a morphism $\calF^v_{g,n}(\gamma,\nu) \to \overline{\calM}_{g,n}$. \end{theorem}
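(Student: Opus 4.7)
The plan is to leverage Theorem \ref{thm:2}, which already furnishes $\calF^{\bf{v}}_{g,n}(\overline{M}_{\gamma,\nu})$ as a proper Deligne--Mumford stack, and to transfer these properties along a natural morphism to $\calF^v_{g,n}(\gamma,\nu)$. The candidate is the morphism $\Phi$ that takes a twisted fibered surface together with its Alexeev stable map, forgets the map to $\overline{M}_{\gamma,\nu}$, and then replaces $(X, S + F)$ by its relative log canonical model over the base of the family. Standard MMP results for families of slc surface pairs ensure that this stabilization is well-defined on flat families and produces a stable pair in the sense of Definition \ref{def:stablepair}, hence a morphism of moduli functors.

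Next I would establish that $\Phi$ is surjective on geometric points. Given any twisted stable fibered surface pair $(X \to C, S + F)$, the orbifold enhancement $\calC \to \overline{\calM}_{\gamma,\nu}$ described in the introduction provides a preimage in $\calF^{\bf{v}}_{g,n}(\overline{M}_{\gamma,\nu})$, whose image under $\Phi$ recovers the original pair (which is already log canonically stable). Combined with properness of the source, this surjectivity yields properness of $\calF^v_{g,n}(\gamma,\nu)$, once we know it is algebraic. Algebraicity and the Deligne--Mumford property follow from the standard KSBA framework applied to our numerical data: the moduli of stable pairs with fixed invariants is bounded, and automorphism groups are finite because $K_X + S + F$ is ample. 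Alternatively, one can obtain the stacky structure by descent along the surjection $\Phi$ from the known DM stack.

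The morphism to $\overline{\calM}_{g,n}$ is defined by sending $(X \to C, S + F)$ to the stabilization of the pointed curve $(C, f(F_1), \ldots, f(F_n))$, where $f(F_j)$ are the images of the marked fibers on $C$. Functoriality in families follows from the universal property of $\overline{\calM}_{g,n}$, and the invariants $g$ and $n$ in the notation $\calF^v_{g,n}(\gamma,\nu)$ are chosen precisely so that this stabilized pointed curve lands in the expected component.

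The main technical obstacle is verifying that the relative log canonical model exists and is compatible with base change in flat families of twisted fibered surface pairs, particularly across the boundary where surfaces acquire slc singularities along degenerate fibers and the base curve may become nodal. This requires the relative MMP for families of slc pairs of relative dimension two, available through work of Kollár and Hacon--Xu, along with an argument that the steps of the MMP respect the fibered structure so that the contracted pair still fibers over (a possibly contracted model of) $C$. A secondary subtlety is ensuring that the preimages constructed for surjectivity of $\Phi$ exist in families rather than just at geometric points; this reduces to the base-change properties of the orbifold enhancement $\calC \to \overline{\calM}_{\gamma,\nu}$ underpinning the construction in the introduction.
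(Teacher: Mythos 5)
There is a genuine gap: your entire strategy routes properness and algebraicity through a stabilization morphism $\Phi: \calF^{\bf{v}}_{g,n}(\overline{M}_{\gamma,\nu}) \to \calF^v_{g,n}(\gamma,\nu)$ that ``forgets the map and takes the relative log canonical model,'' but the existence of exactly this morphism is posed as an open Question in the introduction of the paper. The obstruction is the one you flag yourself and then defer: the steps of the MMP (and hence formation of the log canonical model) are not known to commute with arbitrary base change, so ``stabilize the universal family'' does not define a morphism of moduli functors. Since you need $\Phi$ both to transfer properness from Theorem \ref{thm:2} and (in your alternative) to obtain the stack structure ``by descent along the surjection,'' the argument collapses without it; this is not a technical loose end but the central difficulty the construction must avoid. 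A secondary point: your pointwise surjectivity argument via the orbifold enhancement only applies over a smooth base curve (Proposition \ref{prop:surj}), whereas boundary objects of $\calF^v_{g,n}(\gamma,\nu)$ are slc fibrations that need not be equidimensional, so even surjectivity of a hypothetical $\Phi$ would need a separate argument; and separatedness of the target, which your ``surjection from a proper stack'' step silently requires, is not addressed.

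The paper's route is designed precisely to sidestep family-wise MMP. The stack $\calF^v_{g,n}(\gamma,\nu)$ is \emph{defined} (Definition \ref{def:ffunctor}, Proposition \ref{prop:thm3}) as the closure of a constructible locus inside a relative Hom stack $\mc{D}$ over $\calA_v \times \overline{\calM}_{g,n}$, where $\calA_v$ is the KSBA space; this gives the finite type Deligne--Mumford structure and the morphism to $\overline{\calM}_{g,n}$ for free as a projection --- note that by Proposition \ref{prop:basecurve} the base curve of an slc fibration is already stable, so no stabilization of $(C, f(F_1),\ldots,f(F_n))$ is needed, contrary to your description. Properness (Corollary \ref{cor:proper}) is then proved by the valuative criterion: a family over a punctured curve is lifted to a family of twisted stable maps via Proposition \ref{prop:stable1}, extended using properness of $\calK_{g,n}(\overline{\calM}_{\gamma,\nu})$ after finite base change, and the MMP is run only on the total space of that single one-parameter family, where existence and uniqueness of the stable model is standard and no compatibility with base change in families is required. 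If you want to salvage your approach, you would have to prove the functoriality of stabilization that the paper explicitly leaves open (the authors address it for elliptic surfaces in \cite{kennydori1}); otherwise the argument should be restructured along the Hom-stack-plus-valuative-criterion lines above.
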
 

More precisely, we lift a one parameter family of twisted stable fibered surfaces in the interior of $\calF^v_{g,n}(\gamma,\nu)$ to a family of twisted stable maps. There is then a unique limit in $\calF^v_{g,n}(\gamma,\nu)$ by running stable reduction on the family of twisted stable maps, taking the coarse space, and then running the minimal model program. 

An interesting question is to determine which surfaces appear on the boundary of $\calF^v_{g,n}(\gamma,\nu)$. That is, what are the stable limits of families of $\nu$-pointed genus $\gamma$ fibrations over an $n$-pointed genus $g$ stable curve provided via MMP? 

Using the work of La Nave \cite{ln}, we can answer this question in the case when $\gamma = \nu = 1$, i.e. the case of elliptic surfaces. The starting point is the analysis of log canonical models of elliptic surfaces with a section and marked fibers carried out in \cite{calculations}. Using these results, we describe the twisted elliptic surface pairs appearing on the boundary explicitly. Furthermore, La Nave in \cite{ln} describes how the steps of the minimal model program affect the central fiber of the stable limit of the corresponding twisted stable maps. As a result, we obtain a complete description of the boundary components of $\calF^v_{g,n}(1,1)$: 

\begin{theorem}[see Theorem \ref{thm:elliptic}]\label{thm:introelliptic} The boundary of $\calF^v_{g,n}(1,1)$ parametrizes broken elliptic surfaces (see Definition \ref{def:brokenelliptic}) consisting of an slc union of elliptic components and trees of pseudoelliptics (see Definition \ref{def:pseudoelliptic}) glued along fibers. \end{theorem}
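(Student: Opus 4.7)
The plan is to analyze the stable reduction process in one-parameter families explicitly in the elliptic case, combining the twisted stable maps machinery of Theorem \ref{thm:2} with La~Nave's MMP analysis and the log canonical model calculations of \cite{calculations}. Fix a family over a punctured disk $\Delta^*$ of twisted stable elliptic fibered surface pairs in the interior of $\calF^v_{g,n}(1,1)$. By Theorem~\ref{theorem1} applied with $\gamma = \nu = 1$, this lifts to a family of twisted stable maps to $\overline{\calM}_{1,1}$, and properness of $\calK_{g,n}(\overline{\calM}_{1,1})$ supplies a (unique after a finite base change) limit: a twisted stable map $\calC_0 \to \overline{\calM}_{1,1}$ from a nodal orbifold curve, together with the pulled-back family $\calX_0 \to \calC_0$.

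Next, I would pass to coarse spaces to produce a fibered surface $X_0' \to C_0$, and then run the relative log MMP over $C_0$ on the pair $(X_0', S + F)$ to obtain the stable limit in $\calF^v_{g,n}(1,1)$. Here the sectional part $S$ is built from the sections of the twisted universal family and the fibral part $F$ is built from the marked fibers over the stacky/nodal locus of $\calC_0$. The MMP steps needed are exactly the ones computed in \cite{ln} and \cite{calculations}: divisorial contractions and flips of $(-1)$- and $(-2)$-curves meeting the boundary in the prescribed way, together with log canonical contractions of entire components of $C_0$ along which the relative log canonical divisor fails to be big. The first two kinds of operations keep the fibration structure and produce slc elliptic components (in the sense of Definition \ref{def:brokenelliptic}), while the last kind contracts an entire elliptic surface component onto a curve lying in a fiber of a neighboring component, producing a pseudoelliptic in the sense of Definition \ref{def:pseudoelliptic}.

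With this dictionary in hand, I would prove the theorem in two directions. For the first direction, I would show that any limit arising as above is a broken elliptic surface: each remaining fibered component is slc as a pair with marked sections and fibers by construction of the MMP, the contracted components become pseudoelliptic trees because iterating the log canonical contraction of a fibered component produces a tree rather than a cycle (the dual graph of the degeneration is a tree over each node of $\calC_0$, inherited from the tree structure of bubbles that appear in stable reduction of twisted curves), and the resulting pair is slc because it is a log canonical limit of a family of stable pairs. The gluing occurs along fibers because the nodal locus of $\calC_0$ is where gluing of coarse spaces happens, and the fibers over stacky nodes are exactly the marked fibers. For the second direction, I would exhibit every such broken elliptic surface as a stable limit by constructing, for any configuration of elliptic components and pseudoelliptic trees glued along fibers, an explicit smoothing to a family of twisted stable elliptic fibrations; this uses the local-to-global deformation theory of slc elliptic pairs along fiber gluings, which is unobstructed in this setting by the explicit local models in \cite{calculations}.

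The main obstacle is controlling the pseudoelliptic trees: one must verify that the MMP never produces a pseudoelliptic configuration outside those of Definition~\ref{def:pseudoelliptic} (so no cycles, no non-fiber gluings, no worse-than-slc singularities appear at the contraction loci), and conversely that every such pseudoelliptic tree actually arises. This is where the precise matching between La~Nave's step-by-step description of the MMP on the coarse space and the twisted stable map picture provided by Theorem~\ref{thm:2} is essential, since it reduces both checks to the already-classified local log canonical models of \cite{calculations} for elliptic fibrations with a section and marked fibers.
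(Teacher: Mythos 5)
Your overall strategy --- lift the punctured family to twisted stable maps, use properness of $\calK_{g,n}(\overline{\calM}_{1,1})$ to obtain a limit after finite base change, pass to coarse spaces, and then stabilize via the MMP using \cite{ln} and \cite{calculations} --- is the same as the paper's (it is Corollary \ref{cor:proper} made explicit in the case $\gamma=\nu=1$). However, the step that carries the actual content of Theorem \ref{thm:elliptic} is misdescribed. The coarse limit is already the relatively stable model over $C_0$: it is equidimensional with every fiber stable or twisted, so there is no further ``relative log MMP over $C_0$'' to run; what remains is the absolute MMP, and by Proposition \ref{prop:basecurve} the only divisors it affects are the sections lying over the unstable components of $C_0$. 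Your account of this stage --- ``divisorial contractions and flips of $(-1)$- and $(-2)$-curves'' plus ``log canonical contractions of entire components of $C_0$'' which ``contract an entire elliptic surface component onto a curve lying in a fiber of a neighboring component, producing a pseudoelliptic'' --- is not what happens, and it conflates contracting the \emph{section} of a component (which is what produces a pseudoelliptic, Definition \ref{def:pseudoelliptic}) with contracting the component itself. The paper's proof hinges on \cite[Theorem 7.1.2]{ln}: for a component attached along a single twisted or stable fiber, the contraction of its section is a log \emph{flipping} contraction of the threefold total space, and the flip blows up the attaching fiber of the neighboring component into an intermediate fiber; this is precisely why the Type I pseudoelliptic trees of Definition \ref{def:brokenelliptic} are glued along intermediate fibers. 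Components attached along two fibers with no marked fibers have $(K_X+S+F)$-trivial section and are dealt with by a log canonical contraction, giving Type II pseudoelliptics; finally, rational pseudoelliptics may be further contracted by the log canonical series onto the intermediate fiber or onto a point (yielding a Weierstrass fiber), by \cite[Proposition 7.4 and Section 8.3]{calculations}. Without these case-by-case inputs you cannot conclude the specific structure asserted in Definition \ref{def:brokenelliptic}, so the proposal as written has a genuine gap exactly where the theorem is proved.

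Two further points. The ``no cycles'' argument by appeal to bubbling in stable reduction of twisted curves is vague and unnecessary in that form: the tree structure of Type I components comes from the combinatorics of the unstable part of the stable base curve together with the inductive flip analysis above. And the converse direction you add --- that every broken elliptic surface occurs as a limit, justified by an alleged unobstructed smoothing from the local models of \cite{calculations} --- is neither claimed nor proved in the paper, and the unobstructedness assertion is unsupported; the theorem only states that boundary points are stable broken elliptic surfaces, so you should either drop that direction or supply an actual deformation-theoretic argument.
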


The above theorem is key to understanding moduli spaces of $\calA$-weighted stable elliptic surfaces which we study in \cite{kennydori1}. We note that Brunyate \cite{brunyate} used similar methods to describe the boundary of the moduli space of elliptic K3 surfaces in the case where the section and singular fibers are marked with very small coefficients. \\ 

In passing from $\calF^{\bf{v}}_{g,n}(\overline{M}_{\gamma, \nu})$ to $\calF^v_{g,n}(\gamma, \nu)$, one begins with the data $(f : X \to C \to \overline{M}_{\gamma,\nu}, S + F)$ of a twisted fibered surface pair and an Alexeev stable map to $\overline{M}_{\gamma,\nu}$, then runs the minimal model program on the pair $(f : X \to C, S + F)$ to make $\omega_X(S + F)$ ample in the absolute sense rather than ample relative to the morphism $X \to \overline{M}_{\gamma,\nu}$. It is natural to ask if this process is functorial:

\begin{question} Is there a morphism $\calF^{\bf{v}}_{g,n}(\overline{M}_{\gamma,\nu}) \to \calF^v_{g,n}(\gamma,\nu)$ that forgets the map to $\overline{M}_{\gamma,\nu}$? \end{question} 

The difficulty in answering the above question is that the steps of the minimal model program are not always compatible with arbitrary base change. Thus one needs a strong vanishing theorem or a fine study of the actual birational transformations that occur in families of twisted fibered surfaces when stabilizing. In \cite{kennydori1}, we carry this out in the case of elliptic surfaces $\gamma = \nu = 1$. 

\subsection{Enumerative geometry} 

Using the theory of twisted stable maps, Abramovich-Graber-Vistoli construct Gromov-Witten invariants of Deligne-Mumford stacks in the algebraic setting. In our case, these are computed by virtual invariants on $\calK_{g,n}(\overline{\calM}_{\gamma, \nu})$. The map in Theorem \ref{theorem1} suggests that one may be able to use birational geometry of fibered surfaces to understand Gromov-Witten invariants of $\overline{\calM}_{\gamma, \nu}$. \\

This leads us to ask the following question, and make the following conjecture:

\begin{question} Do the spaces  $\calF^v_{g,n}(\gamma,\nu)$ admit a virtual fundamental class? \end{question}

\begin{conj} The map $\calK_{g,n}(\overline{\calM}_{\gamma,\nu}) \to \calF^{\bf{v}}_{g,n}(\overline{M}_{\gamma,\nu})$ in Theorem \ref{thm:2} is a \emph{virtual normalization}. That is, it induces an equivalence between natural virtual fundamental classes. \end{conj}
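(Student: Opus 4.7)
The plan is to establish a virtual fundamental class on $\calF^{\bf{v}}_{g,n}(\overline{M}_{\gamma,\nu})$, to produce a morphism of perfect obstruction theories compatible with the forgetful map $\varphi : \calK_{g,n}(\overline{\calM}_{\gamma,\nu}) \to \calF^{\bf{v}}_{g,n}(\overline{M}_{\gamma,\nu})$ of Theorem \ref{thm:2}, and then to invoke virtual pushforward to deduce the desired identity $\varphi_*[\calK_{g,n}(\overline{\calM}_{\gamma,\nu})]^{\mathrm{vir}} = [\calF^{\bf{v}}_{g,n}(\overline{M}_{\gamma,\nu})]^{\mathrm{vir}}$. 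The class on $\calK_{g,n}(\overline{\calM}_{\gamma,\nu})$ is Abramovich--Graber--Vistoli's, governed by the two-term complex $(R\pi_* f^* T_{\overline{\calM}_{\gamma,\nu}})^\vee$ associated to the universal twisted stable map $f : \calC \to \overline{\calM}_{\gamma,\nu}$. The natural candidate obstruction theory on the target should control deformations of a map from a nodal curve to the coarse space $\overline{M}_{\gamma,\nu}$ together with the induced fibered surface pair $(X \to C, S + F)$ constructed as in the overview following Theorem \ref{thm:2}.

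Concretely I would proceed in the following order. First, identify the open dense substack of $\calK_{g,n}(\overline{\calM}_{\gamma,\nu})$ where the source orbifold curve has trivial generic stabilizer and smooth total space; on this locus $\varphi$ is an isomorphism, since the passage from a representable map to its coarse fibered surface pair loses no information. Second, analyze the fibers of $\varphi$ over the boundary: these should be classifying gerbes banded by the cyclic stabilizer groups encoding the twisted structure at nodes of $\calC$, and the standard accounting for a $B\mu_r$-gerbe --- virtual degree $1/r$ cancelled by the length-$r$ multiplicity of the corresponding Alexeev boundary stratum --- would show that no multiplicity is lost along $\varphi$. Third, lift this geometric picture to the derived level by producing a morphism of obstruction theories whose mapping cone records only these stabilizer directions. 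Finally, invoke Manolache's virtual pullback formalism, or Costello's pushforward theorem in the proper quasi-birational setting, to globalize the pointwise picture.

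The principal obstacle is precisely the existence of a canonical virtual fundamental class on $\calF^{\bf{v}}_{g,n}(\overline{M}_{\gamma,\nu})$, which is the content of the first Question above and is nontrivial because moduli of Alexeev stable maps from slc surface pairs generically fail to carry perfect obstruction theories of the expected rank: slc singularities have pathological higher cotangent cohomology, and the MMP-type birational modifications that define $\calF^{\bf{v}}$ are not compatible with arbitrary base change. A pragmatic workaround is to simply \emph{define} $[\calF^{\bf{v}}_{g,n}(\overline{M}_{\gamma,\nu})]^{\mathrm{vir}} := \varphi_*[\calK_{g,n}(\overline{\calM}_{\gamma,\nu})]^{\mathrm{vir}}$, after which the content of the conjecture reduces to verifying that this class is intrinsic to $\calF^{\bf{v}}$, e.g.\ functorial with respect to alternative presentations or pullback from a natural ambient obstruction theory on $\calA_{\bf{v}}(\overline{M}_{\gamma,\nu})$. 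A secondary and more technical difficulty is controlling the compatibility of the two obstruction theories along boundary strata corresponding to broken fibered surfaces, as in the elliptic case of Theorem \ref{thm:introelliptic}, where the orbifold degenerations of $\calC$ must be matched stratum by stratum with the infinitesimal deformation problem on the stable-pair side.
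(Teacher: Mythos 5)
There is no proof of this statement in the paper to compare against: it is stated as a conjecture, and the paper even records the essential prerequisite as an open problem (the question of whether $\calF^{v}_{g,n}(\gamma,\nu)$, and implicitly $\calF^{\bf{v}}_{g,n}(\overline{M}_{\gamma,\nu})$, admits a virtual fundamental class at all). Your proposal does not close this gap; it restates the difficulty and then sidesteps it. The ``pragmatic workaround'' of \emph{defining} the virtual class on $\calF^{\bf{v}}_{g,n}(\overline{M}_{\gamma,\nu})$ as $\varphi_*$ of the Abramovich--Graber--Vistoli class makes the conjectured equality a tautology rather than a theorem: the content of the conjecture is precisely that a class constructed \emph{intrinsically} on the target (from a natural obstruction theory for Alexeev stable maps from slc fibered surface pairs, or for the pairs themselves) agrees with the pushforward. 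You acknowledge this yourself when you say the remaining task is to verify the class is ``intrinsic,'' but no construction of an intrinsic obstruction theory is offered, and none is currently available --- as the paper notes, even the deformation theory of stable pairs is not developed at the level needed, and slc singularities obstruct a two-term (perfect) theory of the expected rank.

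Beyond the central missing ingredient, several intermediate assertions are stated without justification and are genuinely in doubt. The claim that $\varphi$ is an isomorphism over a dense open locus is not established anywhere: the paper's remark following Corollary \ref{cor:morphism} explicitly says the infinitesimal structure of this map is complicated and that one only \emph{expects} a finite ramified morphism, with ramification locus hard to determine; passing from the twisted stable map to its coarse fibered surface pair can identify or ramify points even where the underlying geometry looks untwisted. The description of the boundary fibers as $B\mu_r$-gerbes whose $1/r$ virtual degree cancels a length-$r$ multiplicity on the Alexeev side is a heuristic, not an argument --- no such stratum-by-stratum multiplicity computation is carried out, and it is not clear the fibers have that structure. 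Finally, the appeal to Costello's pushforward or Manolache's virtual pullback requires hypotheses (a perfect obstruction theory on the target fitting into a compatible triangle, properness plus pure-degree or virtually smooth conditions) that are exactly what is unavailable here. So the proposal is a reasonable research outline for attacking the conjecture, but it contains no step that could presently be completed into a proof.
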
 

One might also expect to construct weighted invariants of $\overline{\calM}_{\gamma, \nu}$ as in \cite{alexeevguy}. However, weighted maps to stacks are not well behaved and it is unclear how to compactify the moduli space. The minimal model program suggests a way to get around this. 

Indeed, in \cite{kennydori1} we study moduli spaces $\calE_{v,\calA}$ of $\calA$-weighted stable elliptic surfaces. These are replacements of the moduli space of weighted stable maps to $\overline{\calM}_{1,1}$. One can hope to construct an analogue of \emph{quasi-map} invariants of $\overline{\calM}_{1,1}$ using these spaces.

\begin{question} Do the spaces $\calE_{v, \calA}$ admit a virtual fundamental class? \end{question}

The difficulty in answering the above question is the fact that for arbitrary coefficients $\calA$, the infinitesimal structure of the moduli space of stable pairs has not been developed. Indeed, we do not have a good deformation theory of stable pairs with arbitrary coefficients. We hope the considerations in this paper and \cite{kennydori1} will help better understand this phenomenon. 

\subsection{Higher dimensions} In this paper we restrict to fibered surfaces in order to avoid the subtleties involved with slc singularities and moduli of stable pairs in higher dimensions. There should not, however, be serious difficulties in extending the results of this paper to higher dimensions assuming we have access to a suitable moduli space of stable pairs. 

More precisely, given a Deligne-Mumford stack $\calM_v$ of $n$-dimensional stable pairs $(X,D)$ with volume $v$ and projective coarse moduli space $M_v$, one can use twisted stable maps to $\calM_v$ to construct a moduli space of $(n + 1)$-dimensional pairs $(X \to C \to M_v, D + F)$. Here $(X \to C, D)$ a flat family with generic fiber of type $\calM_v$ with coarse map $C \to M_v$, the divisor $F$ is a sum of reduced marked fibers, and $(X, D + F) \to M_v$ is an Alexeev stable map. 

This is carried out inductively starting with curves in  \cite{abramovich} to study moduli spaces and stable reduction of higher dimensional pairs equipped with a plurifibration.

\subsection{Determining the main components} 

An important question that arises when compactifying moduli spaces is to characterize the main components moduli theoretically. In our case, one may be interested in determining the closure in $\calF^{\bf{v}}_{g,n}(\overline{M}_{\gamma,\nu})$ of the locus of normal fibered surfaces. An often fruitful approach is to use logarithmic geometry to characterize the main component as a moduli of objects endowed with a natural log structure (see \cite{fkato} for the case of stable curves and \cite{loggeom} for an introduction to logarithmic geometry). 

This approach almost works in our setting. Abramovich-Vistoli characterize the smoothable nodal orbifold curves as those that are \emph{balanced}, that is, \'etale locally isomorphic to 
$$
\left[\mathrm{Spec} (k[x,y]/(xy))/\boldsymbol \mu_r \right]
$$
around each node where $\boldsymbol \mu_r$ is the group of $r$-th roots of unity acting by $(x,y) \mapsto (\alpha x, \alpha^{-1} y)$. It is shown in \cite{olsson} that a family of balanced twisted curves can be endowed with a canonical log structure making it a log smooth morphism. It follows that the fibered surface stack $\calX \to \calC$ induced by a \emph{balanced} twisted stable map $\calC \to \overline{\calM}_{\gamma,\nu}$ carries a canonical log structure making $\calX$ a log smooth Deligne-Mumford stack.

However, it is not necessarily true that the coarse space $X$ of $\calX$ is a log smooth surface. Therefore the best we can do is describe the main component of $\calF^{\bf{v}}_{g,n}(\overline{M}_{\gamma,\nu})$ as parametrizing coarse spaces of log smooth fibered surface stacks. One may hope that a better understanding of the infinitesimal structure of the morphism $\calK_{g,n}(\overline{\calM}_{\gamma,\nu}) \to \calF^{\bf{v}}_{g,n}(\overline{M}_{\gamma,\nu})$ can be used to exploit this fact in studying the boundary of the main component.

\subsection{Conventions} 

We work over a field of characteristic 0 convenience. We expect the results of this paper to hold in positive, possibly large enough, characteristic. Indeed the theory of twisted stable maps has been developed in arbitrary characteristic in \cite{aov}. The main difficulties for moduli of surfaces in positive characteristic are discussed in \cite{mgnw} and many of these have since been addressed in the literature.    

\subsection*{Acknowledgements}We thank our advisor Dan Abramovich for his constant support. We thank Valery Alexeev and Dhruv Ranganathan for helpful suggestions. Research of both authors supported in part by funds from NSF grant DMS-1500525.

\section{Moduli of stable pairs}
	
We begin with the relevant background from the theory of stable pairs, stable maps and their moduli. The starting point for compact moduli spaces of higher dimensional varieties is the higher dimensional generalization of a stable curve.  
	
	\begin{definition}\label{def:loc} Let $(X, D)$ be a pair of a normal variety and a $\Q$-divisor such that $K_X + D$ is $\Q$-Cartier. Suppose that there is a log resolution $f: Y \to X$ such that $$K_Y + \sum a_E E = f^*(K_X + D),$$ where the sum goes over all irreducible divisors on $Y$. We say that the pair $(X,D)$ has \textbf{log canonical singularities} (or is lc) if all $a_E \leq 1$. \end{definition}
	
	\begin{definition}\label{def:slc} Let $(X, D)$ be a pair of a reduced variety and a $\Q$-divisor such that $K_X + D$ is $\Q$-Cartier. The pair $(X,D)$ has \textbf{semi-log canonical singularities} (or is slc) if:
		\begin{itemize}
			\item The variety $X$ is S2,
			\item $X$ has only double normal crossings in codimension 1, and 
			\item If $\nu: X^{\nu} \to X$ is the normalization, then the pair $(X^{\nu}, \sum d_i \nu_*^{-1}(D_i) + D^{\nu})$ is log canonical, where $D^{\nu}$ denotes the preimage of the double locus on $X^{\nu}$. \end{itemize}
	\end{definition}
	
	\begin{definition} \label{def:stablepair} A pair $(X, D)$ of a projective variety and $\Q$-divisor is a \textbf{stable pair} if:
		\begin{enumerate}
			\item $(X,D)$ is an slc pair, and
			\item $\omega_X(D)$ is ample.
		\end{enumerate}
	\end{definition}

	There is also a notion of stable maps for surface pairs due to Alexeev \cite{mgnw}.
	
	\begin{definition}\label{def:stablemap} Let $X$ be a connected projective surface, let $D \subset X$ be a divisor, and let $M \subset \PP^r$ be a projective scheme. Then the morphism $f: X \to M$ is called an \textbf{Alexeev stable map} of the pair $(X,D)$ to $M$ if:
		\begin{enumerate}
			\item the pair $(X,D)$ is slc (in particular $(\omega_X(D))^{[m]}$ is invertible for some integer $m >0 $);
			\item the line bundle $\omega_X(D)^{[m]}$ is $f$-ample. 
		\end{enumerate} \end{definition}
		
		\begin{remark} Note that condition $(2)$ in Definition \ref{def:stablemap} is equivalent to the statement that the line bundle $\omega_X(D)^{[m]} \otimes f^*\calO_M(mn)$ is ample for sufficiently large $n$. This is \emph{independent} of the choice of projective embedding of $M$ since condition 2 evidently is. \end{remark} 
		
		Given a stable map $f: X \to M$ from an slc surface pair $(X,D)$, one has a well defined triple of rational numbers: 
		$$
		v_1 = c_1(\omega_X(D))^2, \hspace{3ex} v_2 = c_1(\omega_X(D))\cdot c_1(f^*\calO_M(n)), \hspace{3ex} v_3 = c_1(f^*\calO_M(n))^2.
		$$
		When $D$ is a reduced divisor, there exists a projective moduli space of stable maps: 
		
	\begin{theorem}\cite{mgnw}\label{alexeev} Given rational numbers  $v_1, v_2$ and $v_3$, there is a Deligne-Mumford stack $\calA_{\bf{v}}(M)$ admitting a projective coarse moduli space for stable maps $f: (X,D) \to M$ with invariants $\bf{v}$ $\ = (v_1,v_2,v_3)$ where $(X,D)$ is an slc surface pair and $D$ is a reduced divisor. \end{theorem}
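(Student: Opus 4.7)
The plan is to follow Alexeev's construction from \cite{mgnw}, which adapts the KSBA strategy for moduli of stable pairs to the setting of stable maps. The argument has three main pillars: boundedness of the moduli problem, representability as a locally closed substack of a Hilbert scheme modulo a group action, and properness together with projectivity of the coarse space.

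First I would establish boundedness. Fixing the invariants $\bf{v}=(v_1,v_2,v_3)$ along with the projective embedding $M\subset \PP^r$, for any stable map $f:(X,D)\to M$ the $\Q$-line bundle $L_n = \omega_X(D)^{[m]}\otimes f^*\calO_M(mn)$ is ample with intersection $L_n^2$ determined by $v_1,v_2,v_3$ and $n$. The key technical input is Alexeev's boundedness theorem for slc surface pairs with bounded volume (generalizing Matsusaka's big theorem), which produces a uniform integer $N$ so that some power of $L_n$ embeds $X$ into $\PP^N$ with bounded Hilbert polynomial. This shows all stable maps with invariants $\bf{v}$ arise, up to $\mathrm{PGL}_{N+1}$-action, as points of a fixed Hilbert scheme $H_0 := \mathrm{Hilb}^P(\PP^N\times M)$ parametrizing graphs of morphisms.

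Next I would carve out the relevant locus. Inside $H_0$, impose in turn the following conditions: (i) the fiber $X\subset\PP^N$ is a reduced connected surface, (ii) $D\subset X$ is a reduced Cartier-in-codimension-one divisor (handled by a relative Hilbert or Hom scheme over $H_0$), (iii) $\omega_{X/S}^{[m]}$ is flat with formation commuting with base change (Koll\'ar's hull-and-husk condition), (iv) the pair $(X,D)$ has slc singularities, and (v) the line bundle $\omega_X(D)^{[m]}$ is $f$-ample with the prescribed numerical invariants. Each of these is a locally closed condition on $H_0$, cutting out a locally closed subscheme $H\subset H_0$. The stack $\calA_{\bf v}(M)$ is then the quotient stack $[H/\mathrm{PGL}_{N+1}]$. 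It is Deligne--Mumford because the automorphism group scheme of a stable map is finite: any automorphism of $(X,D)$ commuting with $f$ preserves the ample $\Q$-line bundle $\omega_X(D)^{[m]}\otimes f^*\calO_M(mn)$, forcing finiteness by standard arguments for polarized varieties with slc singularities.

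For properness I would verify the valuative criterion. Given a family of stable maps over the punctured spectrum of a DVR, semistable reduction after base change produces a semistable model, and the relative log minimal model program for the pair $(X,D)$ over the target, together with the fact that $K_X+D$ is $f$-ample, yields the relative log canonical model which is the unique stable limit; separatedness follows from uniqueness of log canonical models. Finally, projectivity of the coarse moduli space follows from Koll\'ar's ampleness criterion, using that a suitable CM-type determinant line bundle built from $\omega_X(D)^{[m]}$ and $f^*\calO_M(n)$ descends to an ample bundle on the coarse space.

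The main obstacle, by a substantial margin, is boundedness together with the subtle behavior of $\omega^{[m]}$ in families: without imposing Koll\'ar's condition that $\omega_{X/S}^{[m]}$ is flat and commutes with base change, the functor is not even separated, and verifying that this condition is locally closed and that the resulting slc condition is well-behaved is the hardest technical step. The relative MMP inputs needed for the valuative criterion, while nontrivial, are available for surface pairs with reduced boundary via the work of Koll\'ar--Shepherd-Barron and successors.
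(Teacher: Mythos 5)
This statement is not proved in the paper: it is quoted as background from Alexeev \cite{mgnw}, so there is no internal argument to compare against. Your outline --- boundedness for slc surface pairs with fixed numerical invariants $\bf{v}$, a locally closed locus in a Hilbert scheme cut out using Koll\'ar's condition on $\omega_{X/S}^{[m]}$, the quotient stack by $\mathrm{PGL}_{N+1}$, the valuative criterion via relative log MMP and uniqueness of log canonical models, and projectivity of the coarse space via Koll\'ar's ampleness criterion --- is essentially the strategy of the cited source, and you correctly flag boundedness and the behavior of $\omega^{[m]}$ in families as the genuinely hard steps, so the proposal is a faithful sketch of the known proof rather than a new route.
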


\section{Twisted stable maps of Abramovich-Vistoli} Here we introduce the space of twisted stable maps of Abramovich and Vistoli. We urge the interested reader to consult \cite{av2}. 

Recall that if $Y \subset \PP^r$ is a projective variety with polarization $\calO_Y(1)$, Kontsevich constructed a proper Deligne-Mumford stack 
$$
\calK_{g,n}(Y,d):= \overline{\calM}_{g,n}(Y, d)
$$
with projective coarse moduli space $\bm{K}_{g,n}(Y,d)$ parametrizing degree $d$ stable maps from a genus $g$ curve into $Y$. Now replace $Y$ by a proper tame Deligne-Mumford stack $\calM$ with a projective coarse moduli space $\bm{M}$ and polarization $\calO_{\bm{M}}(1)$. In \cite{av2} the space $\calK_{g,n}(\calM,d)$ of \emph{$n$-pointed twisted stable maps $\calC \to \calM$ of degree $d$ and genus $g$} is defined as follows.

 \begin{definition} A \textbf{twisted nodal $n$-pointed curve of genus $g$} over a scheme $T$ is a diagram
$$
\xymatrix{\Sigma_i^\mathcal{C} \ar[r] \ar[rd] & \mathcal{C} \ar[d] \\ & C \ar[d] \\ & T}
$$
where
\begin{enumerate}[label = \arabic*)]
\item $\mathcal{C}$ is a tame DM stack, proper over $T$, and \'etale locally is a nodal curve over $T$;
\item $\Sigma_i^{\mathcal{C}}$ $(i = 1, \ldots, n)$ are disjoint closed substacks in the smooth locus of $\mathcal{C} \to T$;
\item $\Sigma_i^{\mathcal{C}} \to T$ are \'etale gerbes;
\item $\mathcal{C} \to C$ is the coarse space map;
\item $\mathcal{C} \to C$ is an isomorphism over the generic point of each component of $C$;
\item the coarse space $C \to T$ is a family of genus $g$ curves overs $T$.
\end{enumerate}

\end{definition}

By the tame assumption on $\calC$, it follows that $C \to T$ is a flat family connected nodal curves over $T$ (see \cite[Proposition 4.1.1]{av2}) and that the coarse space $\Sigma_i^C$ of the gerbe $\Sigma_i^\calC$ embeds into $C$. This makes $(C, \Sigma_i^C) \to T$ into an $n$-pointed nodal curve of genus $g$ over $T$. 

\begin{definition} An \textbf{$n$-pointed twisted stable map $f : (\calC, \Sigma_i^\calC) \to \calM$ of genus $g$ and degree $d$ over $T$} is a diagram
$$
\xymatrix{\calC \ar[r] \ar[d] & \calM \ar[d] \\ C \ar[r] \ar[d] & \bm{M} \\ T &}
$$
where 
\begin{enumerate}[label = \arabic*)]
\item $(\calC \to C \to T,\Sigma_i^\calC)$ is a twisted nodal $n$-pointed curve of genus $g$ over $T$;
\item $\calC \to \calM$ is a representable morphism of stacks, and
\item the coarse map $(C, \Sigma_i^C) \to \bm{M}$ is an $n$-pointed genus $g$ stable map of degree $d$ over $T$.
\end{enumerate}

\end{definition}

Then we have the following: 

\begin{theorem}\cite[Theorem 1.4.1]{av2} There is a proper Deligne-Mumford stack $\calK_{g,n}(\calM,d)$ parametrizing $n$-pointed twisted stable maps of genus $g$ and degree $d$ to $\calM$.  \end{theorem}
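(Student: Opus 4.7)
The plan is to construct $\calK_{g,n}(\calM,d)$ as a Deligne-Mumford stack mapping to the Kontsevich space $\bm{K}_{g,n}(\bm{M},d)$ of ordinary stable maps, and then verify the valuative criterion for properness. First I would set up the stack-theoretic picture: composition with the coarse moduli map $\calM \to \bm{M}$ sends a twisted stable map $(\calC,\Sigma_i^{\calC}) \to \calM$ to the $n$-pointed stable map $(C, \Sigma_i^C) \to \bm{M}$ of the same degree and genus, yielding a natural transformation of moduli problems $\calK_{g,n}(\calM, d) \to \bm{K}_{g,n}(\bm{M},d)$. Since the target is a proper Deligne-Mumford stack by Kontsevich, it carries a universal family $\calU \to \bm{K}_{g,n}(\bm{M},d)$ and a universal stable map $\calU \to \bm{M}$, and the task reduces to parametrizing pairs consisting of a twisted structure on $\calU$ together with a representable lift of the universal map to $\calM$.

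Second, I would realize the stack of twisted structures on $\calU$ as a Deligne-Mumford stack of finite type over $\bm{K}_{g,n}(\bm{M},d)$. Over a geometric point, such a structure consists of a choice of cyclic gerbe of order $r_i$ at each marked section and a balanced cyclic gerbe at each node, with orders bounded once $d$ and the polarization of $\bm{M}$ are fixed. These structures arise from root-stack constructions along the sections and the nodal locus of $\calU$, and assemble into a finite-type algebraic stack $\calS$ with finite diagonal over $\bm{K}_{g,n}(\bm{M},d)$.

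Third, on top of $\calS$ I would invoke the Hom-stack formalism for representable morphisms into a tame proper Deligne-Mumford stack. Given the universal twisted curve $\calC \to \calS$, the relative Hom-stack of representable morphisms $\calC \to \calM$ that recover the given stable map $C \to \bm{M}$ on coarse spaces is a Deligne-Mumford stack locally of finite type over $\calS$, and the degree condition cuts out the finite-type substack $\calK_{g,n}(\calM,d)$. That the diagonal is unramified is automatic from representability of the morphisms into the DM stack $\calM$, so the whole moduli problem is Deligne-Mumford.

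Finally, the main obstacle is properness, which I would establish via the valuative criterion. Separatedness follows from separatedness of $\bm{K}_{g,n}(\bm{M},d)$ combined with rigidity of representable lifts to $\calM$: two such lifts agreeing generically on a normal base agree everywhere. For existence, starting with a twisted stable map over $\Spec K$ for $K$ the fraction field of a DVR $R$, I would first take the limit in the Kontsevich space to obtain an $R$-family of stable maps to $\bm{M}$, then extend the twisted structure and the representable lift across the central fiber. Away from newly-formed nodes, the extension is automatic by tameness of $\calM$. At each new node the correct cyclotomic order is forced by Abhyankar's lemma applied to the representable map to $\calM$: if the image of the node lies in a local chart $[\Spec A/G]$, purity of the branch locus singles out a canonical balanced root stack structure along which the representable lift extends uniquely. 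This is exactly the step carried out in \cite{av2}, and together with the preceding constructions it yields the theorem.
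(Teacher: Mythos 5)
This statement is quoted verbatim from \cite{av2} (Theorem 1.4.1); the paper under review gives no proof of its own, so the comparison is with Abramovich--Vistoli's argument. Your architecture is essentially theirs, modulo later technology: pass to the Kontsevich space of coarse stable maps, exhibit the twisted data as a finite-type Deligne--Mumford stack over it (you do this via Hom-stacks over a stack of twisted structures, in the style of Olsson and Abramovich--Olsson--Vistoli, whereas \cite{av2} works with chart-based atlases), and prove properness by the valuative criterion with an Abhyankar-type computation at the nodes. The last step you cite is indeed the one carried out in \cite{av2}.

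However, the existence half of your valuative criterion has a genuine gap. After taking the Kontsevich limit, the representable lift to $\calM$ is defined only on the generic fiber $\calC_K$, so the locus over which it must be extended is the whole special fiber, which has codimension one; this extension is not ``automatic by tameness.'' One must first apply the valuative criterion of properness for $\calM$ at the DVRs of the generic points of the special-fiber components, which forces a further ramified base change of the base DVR (this is exactly where twisted fibers and monodromy enter, and it is the heart of the stable-reduction step), then normalize and pass to the canonical stack at the resulting quotient singularities, and only then does the purity lemma of \cite{av} extend the map away from finitely many points, with Abhyankar pinning down the cyclotomic index at the nodes; one must also verify representability of the resulting extension (in \cite{av2} this uses minimality of the twisting) before separatedness can be concluded, since uniqueness of the limiting \emph{twisted curve}, not just of the lift on a fixed curve, is needed. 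Two smaller inaccuracies: the twisted structure at a node is not a root stack along a divisor of $\calU$ (the nodal locus has codimension two in the universal curve), so your construction of the stack $\calS$ should instead invoke the classification of twisted curves, e.g.\ via Olsson's log twisted curves; and the bound on the twisting orders comes from representability into $\calM$ (stabilizers inject into automorphism groups of objects of $\calM$), not from the degree and the polarization.
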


\begin{remark}\label{rmk:degree}
We will supress the degree $d$ in the notation of $\calK_{g,n}(\calM)$ for convenience. \end{remark}

 \section{Fibered surfaces from twisted stable maps} \label{sec:av}

In this section, we use twisted stable maps from \cite{av2} to extend the fibered surface results of \cite{av} to stable fibered surface pairs. As a result, we obtain birational models of stable fibered surface pairs $(f:X \to C, S + F)$ with section $S$ and reduced marked fibers $F$, and compare these to the log canonical models provided by the minimal model program.

\subsection{From twisted stable maps to fibered surfaces} In \cite{av}, a complete moduli of fibered surfaces is constructed as a special case of the moduli space of twisted stable maps where the target stack is taken to be 
$
\calM = \overline{\calM}_{\gamma,\nu},
$
the stack of $\nu$-pointed genus $\gamma$ curves. Indeed, a twisted stable map $\calC \to \overline{\calM}_{\gamma,\nu}$ from an \emph{unmarked} twisted nodal curve $\calC$ gives rise to a fibered \emph{stack-like surface} $(\calX \to \calC, \calS_1, \ldots, \calS_\nu)$  (see Definition \ref{def:stacklikesurface}) with sections $\calS_i$ by pulling back the universal family. The coarse space $(X \to C, S_1 + \ldots + S_\nu)$ is then a fibered surface with sections $S_i$ and genus $\gamma$ fibers. 

It is proven in Proposition 6.13 of  \cite{av} that $X \to C \to \overline{M}_{\gamma,\nu}$ is an Alexeev stable map (see Definition \ref{def:stablemap}). In this way, one obtains a morphism 
to the stack of stable maps from a surface fibered in genus $\gamma$ curves with $\nu$ sections $(X \to C, S_1 + \ldots + S_\nu)$ over a genus $g$ curve $C$, to the coarse moduli space of stable curves $\overline{M}_{\gamma,\nu}$. Furthermore, this morphism is finite on each connected component of the source. \\ 

Our goal is to understand what happens when we additionally consider marked points and marked fibers. That is, we consider an slc surface fibered in genus $\gamma$ curves $$(f : X \to C, \sum_{i = 1}^\nu S_i + \sum_{j = 1}^n F_j)$$ with $\nu$ sections $S_i$ and $n$ reduced marked fibers $F_j$ over a genus $g$ curve $C$. 

Let $(\pi: \calC \to T, \Sigma_i^{\calC})$ be a twisted nodal $n$-pointed curve over $T$ and let $\varphi : \calC \to \overline{\calM}_{\gamma,\nu}$ be a representable morphism. Consider the pullback of the universal family $\calU \to \overline{\calM}_{\gamma,\nu}$:

$$
\xymatrix{\calX \ar[r] \ar[d] & \calU \ar[d] \\ \calC \ar[r] & \overline{\calM}_{\gamma,\nu}}
$$

 This gives a family of genus $\gamma$ nodal curves $f : \calX \to \calC$ over $\calC$ with sections $\calS_1, \ldots, \calS_\nu$. In particular, $\calX$ is a tame stack with trivial stabilizer at the generic point so that $X^0 \to C^0$ is a family of nodal curves. Here $X$ and $C$ are the coarse spaces and $X^0$ and $C^0$ are the necessarily non-empty open loci over which the coarse map is an isomorphism; that is, these are the \emph{non-stacky loci}. 

\begin{definition}\label{def:stacklikesurface} A tuple $(f : \calX \to \calC \to T,\calS_i, \Sigma_i^\calC)$ as above is a \textbf{stack-like surface} fibered in genus $\gamma$ curves with $\nu$ sections and marked fibers over $T$. \end{definition}

Taking the coarse space of a stack-like fibered surface gives us a flat family of fibered surfaces $X \to C \to T$ with sections $S_1, \ldots, S_{\nu}$ so that the generic fiber of $X \to C$ is a stable $\nu$-pointed genus $\gamma$ curve. Furthermore, $C$ comes with marked points $\Sigma_i^C$ and we can take 
$
F_i := f^{-1}(\Sigma_i^C)_{red}
$
to be reduced marked fibers on $X$. Then we have that
$$
(X \to C \to T, \sum_{i = 1}^\nu S_i + \sum_j^n F_j)
$$
is a flat family of fibered surface pairs over $T$ with marked sections, marked fibers, and stable generic fiber. Furthermore, there is a canonical coarse space map $C \to \overline{M}_{\gamma,\nu}$ which gives rise to a map $(X, \sum_{i = 1}^\nu S_i + \sum_j^n F_j) \to \overline{M}_{\gamma,\nu}$ by composition. 

\begin{theorem}[See also Proposition 1.8.1 of \cite{abramovich}]\label{thm:mainthm} Suppose $\psi : (\calC, \Sigma_i^\calC) \to \overline{\calM}_{\gamma,\nu}$ is a pointed twisted stable map. Then the coarse space map $(X, \sum_{i=1}^\nu S_i + \sum_{j=1}^n F_j)\to \overline{M}_{\gamma,\nu}$ of the corresponding stack-like fibered surface is a stable map in the sense of Alexeev. \end{theorem}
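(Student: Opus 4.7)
The statement asks us to verify the two conditions of Definition \ref{def:stablemap}: that $(X, S+F)$ is slc, and that $\omega_X(S+F)^{[m]}$ is $g$-ample for the composition $g : X \to C \to \overline{M}_{\gamma,\nu}$. My plan is to work étale-locally on the stack $\calX$, where the pullback description from the universal curve $\calU \to \overline{\calM}_{\gamma,\nu}$ makes the local geometry transparent, and then descend to the coarse space $X$ using compatibility of coarse space formation with étale base change, tame quotients, and relative ampleness.

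For the slc condition, I would combine the fact that $\calU \to \overline{\calM}_{\gamma,\nu}$ has reduced nodal fibers and smooth total stack with the local structure of a balanced twisted nodal curve, which is étale-locally either smooth or of the form $[\Spec(k[u,v]/(uv))/\boldsymbol\mu_r]$ at a node. Pulling back, $\calX$ is étale-locally either smooth or a tame cyclic quotient of a toroidal threefold $\{xy = uv\} \subset \A^4$, so the coarse space $X$ is normal with at worst $A_n$-type (in particular Gorenstein) surface singularities, and the slc condition reduces to lc. Since the gerbes $\Sigma_j^\calC$ lie in the smooth locus of $\calC$ and are disjoint from both the nodes of $\calC$ and the universal sections, $S$ is a disjoint union of smooth sections and $F = \sum F_j$ is a sum of reduced, mutually disjoint fibers. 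A direct local inspection at each point of $S + F$ then shows that the pair is either simple normal crossings on the smooth locus of $X$, or toroidal at an $A_n$-singularity with $S + F$ supported on the toric boundary; either way the pair is lc.

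For Alexeev stability I would factor $g$ as $X \xrightarrow{f} C \xrightarrow{h} \overline{M}_{\gamma,\nu}$ and argue that $\omega_X(S+F)^{[m]} \otimes g^*\calO(N)$ is ample for $N \gg 0$. Because each $\Sigma_j^\calC$ sits over a point where $f$ has reduced fiber, we have $F = f^*\Sigma^C$ as Cartier divisors; combined with the fact that $X$ and $f$ are Gorenstein, the relative dualizing identity yields
$$
K_X + S + F \;=\; \bigl(K_{X/C} + S\bigr) \;+\; f^*\!\bigl(K_C + \Sigma^C\bigr).
$$
The first summand is $f$-ample because each fiber of $f$ is a stable $\nu$-pointed genus $\gamma$ curve. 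Kontsevich stability of the coarse map $(C, \Sigma^C) \to \overline{M}_{\gamma,\nu}$, which is built into the definition of a twisted stable map, ensures $\omega_C(\Sigma^C) \otimes h^*\calO(N)$ is ample on $C$ for $N \gg 0$. The standard fact that an $f$-ample line bundle plus the $f$-pullback of a sufficiently ample class from $C$ is ample on $X$ then delivers $g$-ampleness. The chief obstacles to watch for are the bookkeeping of multiplicities at the interaction of $A_n$ singularities of $X$ with $S + F$, and verifying that $F_j$ appears with coefficient one in the adjunction formula rather than as a multiple fiber; both are controlled by representability of $\calC \to \overline{\calM}_{\gamma,\nu}$ together with the placement of the marked gerbes away from nodes of $\calC$.
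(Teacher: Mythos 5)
Your overall strategy (check slc, then get relative ampleness from the fiberwise stability of the pointed curves plus Kontsevich stability of the coarse map $C \to \overline{M}_{\gamma,\nu}$) matches the paper, but as written there are two genuine gaps. The first is in the slc step: you claim $X$ is normal with at worst $A_n$ (hence Gorenstein) singularities and that slc therefore reduces to lc. This fails in exactly the situations the theorem must cover. When $\calC$ is nodal, $\calX$ is the pullback of the universal family over a non-normal base, so $X$ is a union of surface components glued along whole fibers and is non-normal in codimension one along those fibers; even when $\calC$ is smooth, wherever a node of a fiber is not smoothed in the family direction (the map meets the boundary of $\overline{\calM}_{\gamma,\nu}$ with vanishing local equation) the local model is $xy=0$, again a non-normal double normal crossing surface. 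Moreover the quotient singularities that do occur need not be Gorenstein or of type $A_n$. Because $X$ is genuinely non-normal, the real content of this step is not a toroidal/lc inspection but the $\Q$-Cartier condition in the definition of slc: the paper works \'etale-locally where $(X, S+F)$ is a finite quotient of a pointed family of stable curves over a nodal base, quotes \cite{av} (Lemma 6.6) to see the cover is slc, quotes \cite{av} (Lemma 6.4) to reduce slc-ness of the quotient to $K_X + S + F$ being $\Q$-Cartier, and verifies $\Q$-Cartierness via \cite{av} (Proposition 6.10) away from the marked fibers and via $\Q$-factoriality of quotient singularities near the marked fibers (where $C$, and hence $X$, is normal). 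Your argument omits this check entirely, and it is only vacuous under the false normality assumption.

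The second gap is in the positivity step. The identity $K_X + S + F = (K_{X/C} + S) + f^*(K_C + \Sigma^C)$, together with the claims that $F = f^*\Sigma^C$ as Cartier divisors and that $X$ and $f$ are Gorenstein, is false on the coarse space: the coarse fiber over the image of a stacky marked point is typically a multiple fiber (these are exactly the ``twisted fibers,'' which are non-reduced), so $f^*\Sigma^C$ is a multiple of the reduced divisor $F$, and $X$ is in general only $\Q$-Gorenstein there. Representability of $\calC \to \overline{\calM}_{\gamma,\nu}$ does not rule this out; it is precisely the stacky structure at the marked points of $\calC$ that produces multiple coarse fibers. The repair is what the paper does: perform this exact computation on the stack $\calX$, where $g$ has reduced fibers so $\widetilde{F} = g^*\Sigma^\calC$ and $\omega_\calX = g^*\omega_\calC \otimes \omega_g$ (as $\calC$ is Gorenstein), and then descend ampleness along the finite coarse map $\psi : \calX \to X$. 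Your opening sentence gestures at working on $\calX$ and descending, but the displayed identity as stated on $X$ is where the argument would break, so the stack-level computation needs to be made explicit.
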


\begin{proof}[Proof of Theorem \ref{thm:mainthm}] We break the proof of this theorem into two parts-- first we show that the pair is slc, and then show that the pair satisfies the relative positivity assumption.

\begin{prop} The coarse space $(X \to C, \sum_{i = 1}^\nu S_i + \sum_{j = 1}^n F_j)$ of a stack-like fibered surface is an slc pair. \end{prop} 

\begin{proof} 

The condition of being slc is \'etale local, so we may replace $X$ by an \'etale neighborhood which has a global chart of $\calX \to \calC$. That is, we have a family of stable curves $Y \to U$ with sections $S_i'$ over a marked nodal curve $(U,\Sigma_j^U)$ as well as marked fibers $G_j$ above $\Sigma_j^U$. Furthermore, 
$$
(Y, \sum_{i = 1}^\nu S_i' + \sum_{j = 1}^{m} G_j)/\Gamma = (X, \sum_{i = 1}^\nu S_i + \sum_{j = 1}^n F_j)
$$
and $(U, \Sigma_j^U)/\Gamma = (C, \Sigma_j^\calC)$ for $\Gamma$ a finite group with an essential group action on $Y \to U$. 

By Lemma 6.6 of \cite{av}, $Y$ has slc singularities. Furthermore, the sections $S_i'$ are contained in the smooth locus of the family $Y \to U$, and $G_j$ are fibers over smooth points of $U$. Therefore $(Y, \sum S_i' + \sum G_j)$ is slc. By Lemma 6.4 in \cite{av}, the pair $(X, \sum S_i + \sum F_j)$ is slc if and only if $K_X + \sum S_i + \sum F_j$ is $\Q$-Cartier. 

The property of being $\Q$-Cartier is local so we may check over a neighborhood of $p \in C$. If the point $p$ is a node or an unmarked point of $C$, then we may take a small neighborhood $V$ of $p$ avoiding the markings $\Sigma_i^C$, so that $(X_V, \sum (S_i)|_V)$ is slc by Proposition 6.10 of \cite{av}. Therefore to conclude, we only need to check in a neighborhood of a marked point $p = \Sigma_i^C$. 

Since $p$ is a smooth point of $C$, we may suppose that $C$ is smooth and that $(X \to C, \sum S_j + F)$ is a fibered surface with $F = f^{-1}(p)_{red}$ a reduced marked fiber. Furthermore, we can write $(X,\sum S_i + F) = (Y, \sum S_i' + G)/\Gamma$ as above where $\Gamma$ acts freely away from $p$ and $\Gamma_p = \Gamma$.

By Proposition 6.10 of \cite{av}, the pair $(X, \sum S_i)$ is slc and so $K_X + \sum S_i$ is $\Q$-Cartier. Thus all that remains to check is that $F$ is $\Q$-Cartier. However,  $X$ has finite quotient singularities, as it is the quotient of a family with quotient singularities by a finite group, and thus is $\Q$-factorial. Therefore, $F$ is $\Q$-Cartier. \end{proof}

\begin{cor}\label{cor:coarsespace} Let $(\calX \to \calC \to T, \calS_i, \Sigma_j^\calC)$ be a family of stack-like surfaces fibered in genus $\gamma$ curves with $\nu$ sections over a family of $n$-pointed stacky nodal curves. Then the coarse space
$$
(X \to C \to T, \sum S_i + \sum F_j)
$$
is a flat family of slc fibered surface pairs over $T$ whose construction commutes with arbitrary base change $T' \to T$. \end{cor}

\begin{proof} The stacks $\calX$ and $\calC$ are tame and so flatness of $\calX \to T$ and $\calC \to T$ implies flatness of the coarse space. Furthermore, taking coarse space commutes with arbitrary base change for tame stacks so that the fibers of the coarse space are slc pairs by the proposition. \end{proof} 

Next we need to consider positivity of $\omega_X(\sum S_i + \sum F_j)$. 

\begin{prop} \label{prop:ample} Let $(g : \calX \to \calC, \calS_i, \Sigma_j^\calC)$ be a stack-like surface fibered in genus $\gamma$ curves with $\nu$ sections over a family of $n$-pointed stacky nodal curves. Let $(f : X \to C, \sum S_i + \sum F_j)$ be the coarse fibered surface pair. Then 
$\omega_f(\sum S_i + \sum F_j)
$
is $f$-ample. 
\end{prop} 

\begin{proof} We denote $S = \sum S_i$, $F = \sum F_j$, $\calS = \sum \calS_i$, and $\Sigma = \sum \Sigma_j$. Consider the diagram 
$$
\xymatrix{\calX \ar[r]^{\psi} \ar[d]_{g} & X \ar[d]^{f} \\ \calC \ar[r]^{\pi} & C }
$$
where $\psi$ and $\pi$ are coarse moduli space maps. The morphism $\psi$ is proper and quasi-finite with branch locus contained in $F$. Letting $\widetilde{F} = \sum \widetilde{F}_j$ be marked fibers of $g$ over $F$, we have
$$
\psi^*\omega_f(S + F) = \omega_g(\calS + \widetilde{F})
$$
and it suffices to check that $\omega_g(\calS + \widetilde{F})$ is $g$-ample by base change. The fibration $g$ has reduced fibers so $\widetilde{F} = g^*\Sigma^\calC$ and for any fiber $G$ of $g$, $\omega_g(\calS + \widetilde{F})|_G = \omega_G(\calS|_G)$. It follows that $\omega_g(\calS + \widetilde{F})$ is $g$-ample since $(g : \calX \to \calC, \calS_i)$ is a family of stable pointed curves.  \end{proof} 

\begin{prop}\label{prop:stablemap}  Let $(g: \calX \to \calC, \calS_i, \Sigma_j^\calC)$ be as above and suppose the induced map $(\calC, \Sigma_j^\calC) \to \overline{\calM}_{\gamma,\nu}$ is a twisted stable map. Let $\mu: X \to \overline{M}_{\gamma,\nu}$ be the induced coarse map. Then $\omega_X(S + F)$, with notation as above, is $\mu$-ample. \end{prop}

\begin{proof} As above, we have the following diagram
$$
\xymatrix{\calX \ar[r]^{\psi} \ar[d]^{g} & X \ar[d]^{f} \\ \calC \ar[r]^{\pi} \ar[d]^{h} & C \ar[d]^{l} \\ \overline{\calM}_{\gamma,\nu} \ar[r] & \overline{M}_{\gamma,\nu}}
$$
where $l\circ f = \mu$. We need to show that $\omega_X(S + F) \otimes \mu^*H^k$ is ample for $H$ an ample line bundle on $\overline{M}_{\gamma,\nu}$ and $k$ large enough. Pulling back by the finite morphism $\psi$, it suffices to check that
$$
\psi^*(\omega_X(S + F) \otimes \mu^*H^k) = \omega_{\calX}(\calS + \widetilde{F}) \otimes \psi^*\mu^*H^k
$$
is ample as in the above proposition. Now $\psi^*\mu^*H^k = g^*\pi^*l^*H^k$. Furthermore, $\omega_\calX = g^*\omega_\calC \otimes \omega_g$ since $\calC$ is Gorenstein. 

Putting this together, we have
\begin{align*}
\omega_{\calX}(\calS + \widetilde{F}) \otimes \psi^*\mu^*H^k &= g^*\omega_\calC \otimes \omega_g(\calS + g^*\Sigma^\calC) \otimes g^*\pi^*l^*H^k \\
&= \omega_g(\calS) \otimes g^*(\omega_\calC(\Sigma^\calC) \otimes \pi^*l^*H^k) \\
&= \omega_g(\calS) \otimes g^*\pi^*(\omega_C(\Sigma^C)\otimes l^*H^k)
\end{align*}

Since $\calC \to \overline{\calM}_{\gamma,\nu}$ is a twisted stable map, we know that $\omega_C(\Sigma^C) \otimes l^*(H^k)$ is ample for $k$ large enough and so $\pi^*(\omega_C(\Sigma^C)\otimes l^*H^k)$ is ample by finiteness of $\pi$. Furthermore, $g : (\calX, \calS) \to \calC$ is a family of pointed stable curves so $\omega_g(\calS)$ is $g$-ample. Therefore,
$$
\omega_g(\calS) \otimes g^*\pi^*(\omega_C(\Sigma^C) \otimes l^*H^k)
$$
is ample for $k$ large enough. \end{proof}

This concludes the proof of Theorem \ref{thm:mainthm}. \end{proof}

\begin{cor}\label{cor:morphism} Taking the coarse space of a stack-like fibered surface induces a morphism 
$$
\varphi: \calK_{g,n}(\overline{\calM}_{\gamma, \nu}) \to \calA_{\bf{v}}(\overline{M}_{\gamma,\nu})
$$
that is locally of finite type. \end{cor}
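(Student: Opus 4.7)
The plan is to define $\varphi$ on $T$-points via the construction already described in Theorem \ref{thm:mainthm}. Given a family of twisted stable maps $(\pi : \calC \to T, \Sigma_i^\calC) \to \overline{\calM}_{\gamma,\nu}$, I pull back the universal family $\calU \to \overline{\calM}_{\gamma,\nu}$ to obtain the stack-like fibered surface $\calX \to \calC \to T$ with sections $\calS_i$ and marked substacks, then take coarse moduli spaces to produce a flat family $(X \to C \to T, S + F)$ together with the induced coarse map $\mu : X \to \overline{M}_{\gamma,\nu}$. What remains is to verify that this assignment lands in $\calA_{\bf v}(\overline{M}_{\gamma,\nu})(T)$ for some (locally constant) $\bf v$, that it is natural in $T$, and that the resulting morphism of stacks is locally of finite type.

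The essential pointwise input is in hand: Corollary \ref{cor:coarsespace} furnishes flatness of $X \to T$, slc fibers, and compatibility with arbitrary base change, while Proposition \ref{prop:stablemap} gives $\mu_t$-ampleness of $\omega_{X_t}(S_t+F_t)$ fiberwise. To promote this to a family of Alexeev stable maps, I would first choose a uniform index $m$ on each connected component of $T$ so that $\omega_{X/T}(S+F)^{[m]}$ is an honest line bundle compatible with base change; such an $m$ exists because $\calK_{g,n}(\overline{\calM}_{\gamma,\nu},d)$ is of finite type for each fixed degree $d$, and boundedness of indices for slc surface pairs then applies. Having fixed $m$, fiberwise ampleness plus properness of $X \to T$ upgrades to relative $\mu$-ampleness of $\omega_{X/T}(S+F)^{[m]}$ by standard openness results for ample line bundles in proper flat families (EGA IV).

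The numerical invariants $(v_1,v_2,v_3)$ of Theorem \ref{alexeev} are intersection numbers of line bundles in a flat proper family, hence locally constant on $T$; this decomposes $T$ into open-and-closed pieces each mapping to a specific $\calA_{\bf v}(\overline{M}_{\gamma,\nu})$. Naturality in $T$ is precisely the content of Corollary \ref{cor:coarsespace}, combined with the manifest base-change compatibility of the fiber-product construction $\calX = \calC \times_{\overline{\calM}_{\gamma,\nu}} \calU$. The locally-of-finite-type conclusion follows formally: $\calK_{g,n}(\overline{\calM}_{\gamma,\nu})$ is a disjoint union, indexed by degree, of finite-type Deligne--Mumford stacks over the base field, as is $\calA_{\bf v}(\overline{M}_{\gamma,\nu})$ over choices of $\bf v$, and any morphism between finite-type algebraic stacks over a field is of finite type.

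The main obstacle I anticipate is the passage from fiberwise to relative ampleness, because $X$ need not be normal or $\Q$-Gorenstein in families and I must ensure that formation of $\omega_{X/T}(S+F)^{[m]}$ commutes with base change and that $m$ can be chosen uniformly. I expect to handle this by pulling back along the finite quasi-finite map $\psi : \calX \to X$ and working with the stack-like line bundle $\omega_g(\calS + \widetilde F)$ as in Proposition \ref{prop:stablemap}: on the stack-like side, $\calX \to \calC \to T$ is a flat family of tame Deligne--Mumford stacks with Gorenstein source, so the relative dualizing sheaf is an honest line bundle and relative ampleness in the family follows directly; descent through $\psi$ then transfers the conclusion to the coarse family.
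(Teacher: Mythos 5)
Your proposal is correct and follows essentially the same route as the paper, which states Corollary \ref{cor:morphism} as an immediate consequence of Theorem \ref{thm:mainthm}: the coarse-space construction applied in families, with Corollary \ref{cor:coarsespace} supplying flatness and base-change compatibility and Propositions \ref{prop:ample} and \ref{prop:stablemap} supplying the required ampleness, checked by pulling back along $\psi$ to the stack-like family exactly as you propose. You simply make explicit the bookkeeping (uniform index $m$, local constancy of $\bf{v}$, finite-typeness of the source and target) that the paper leaves implicit.
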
.

\begin{remark} The infinitessimal structure of the map in Corollary \ref{cor:morphism} may be complicated and we do not discuss this here. One expects this to be a finite ramified morphism, but the ramification locus may be hard to determine. See also \cite{fsv} where a related question is discussed in arbitrary dimension but without marked fibers. 
\end{remark}

\subsection{From fibered surfaces to twisted stable maps} 

Next we identify which fibered surfaces appear as above as the coarse space of a stack-like fibered surface $(\calX \to \calC \to \overline{\calM}_{\gamma,\nu}, \calS_i, \Sigma_i^\calC)$ with marked sections and marked fibers. Note that the coarse pair
$$
(f: X \to C \to \overline{M}_{\gamma,\nu}, \sum_{i = 1}^\nu S_i + \sum_{j = 1}^n F_j)
$$
consists of a fibered surface $f : X \to C \to \overline{M}_{\gamma,\nu}$ with $\nu$ sections $S_i$ and $n$ reduced marked fibers $F_j$ such that:
\begin{enumerate}
\item the generic fiber of $f$ is a stable $\nu$-pointed genus $\gamma$ curve, and 
\item every non-stable fiber of $f$ is contained in the support of the marked fibers $\sum F_j$. \end{enumerate} Furthermore, $\omega_f(\sum S_i + \sum F_j)$ is $f$-ample by Proposition \ref{prop:ample}. This motivates the following definition:

\begin{definition}\label{def:twistedsurface} Let $(f : X \to C, \sum_{i = 1}^\nu S_i + \sum_{j = 1}^n F_j)$ be an slc fibered surface pair and let $S = \sum S_i$, $F = \sum F_j$. Then $(f : X \to C, S + F)$ is said to be a \textbf{twisted fibered surface pair} if
\begin{enumerate}[label = (\alph*)]
\item the generic fiber of $f$ is a $\nu$-pointed genus $\gamma$ curve,
\item the support of every non-stable fiber is contained in the support of $F$, and
\item $\omega_f(S + F)$ is $f$-ample. 
\end{enumerate}
\end{definition} 
\begin{remark} Note that in this definition we are still requiring that our fibered surfaces $f : X \to C$ are equidimensional morphisms flat over the smooth locus of $C$. In particular, there is a unique coarse moduli space map $C \to \overline{M}_{\gamma, \nu}$ when $C$ is smooth. This data is preserved when we consider stable limits in the space of maps $\calA(\overline{M}_{\gamma,\nu})$. In Section \ref{sec:stablereduction}, we will consider stable limits of twisted surfaces without the data of the map, in which case the limits may \emph{not} be flat fibrations and so may not have a well defined coarse map to $\overline{M}_{\gamma,\nu}$. 

\end{remark} 
\begin{remark}\label{relativelystable} Condition (c) in Definition \ref{def:twistedsurface}, that $\omega_f(S + F)$ is $f$-ample, is equivalent to requiring that $\omega_X(S + F)$ be $f$-ample since $C$ is Gorenstein so $\omega_X = \omega_f \otimes f^*\omega_C$. Therefore this condition really asserts that $f : (X, S + F) \to C$ is a stable map. Consequently, we often refer to $(X, S + F)$ as the relatively stable model over $C$.  In this case, $(X, S + F)$ is uniquely determined by 
$$
X \cong \Proj_C\left(\bigoplus_{m \geq 0} f_*\calO_X(m(K_X + S + F))\right)
$$
as a fibered surface over $C$. Here $K_X$ is a $\Q$-Cartier divisor  corresponding to $\omega_X$. 
\end{remark}

The following proposition shows that Definition \ref{def:twistedsurface} completely characterizes the coarse spaces of stack-like fibered surfaces with marked fibers. 

\begin{prop}\label{prop:surj} Let $(f: X \to C, \sum_{i = 1}^\nu S_i + \sum_{j = 1}^n F_j)$ be a twisted fibered surface pair over a smooth curve $C$. Then it is the coarse space of a stack-like twisted surface $(\calX \to \calC \to \overline{\calM}_{\gamma,\nu}, \calS_i, \Sigma_j^\calC)$. 
\end{prop}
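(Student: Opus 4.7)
The plan is to build a stack-like fibered surface by (a) extending the classifying map of the stable locus to a representable morphism from an orbifold curve $\calC$, (b) pulling back the universal family, and then (c) matching the resulting coarse space with $(X \to C, S+F)$ using the uniqueness of the relatively stable model in Remark \ref{relativelystable}.

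Set $\Sigma_j^C := f(F_j)$ and $U := C \setminus \bigcup_j \Sigma_j^C$. By condition (b) of Definition \ref{def:twistedsurface}, the restriction $f|_U$ is a family of $\nu$-pointed stable genus $\gamma$ curves (the sections $S_i$ lie in the relative smooth locus by the slc hypothesis), which induces a classifying morphism $\psi_U : U \to \overline{\calM}_{\gamma,\nu}$. By properness of $\overline{M}_{\gamma,\nu}$ and smoothness of $C$, the composition $U \to \overline{M}_{\gamma,\nu}$ extends to a morphism $C \to \overline{M}_{\gamma,\nu}$, so there is no obstruction at the level of coarse spaces.

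To upgrade $\psi_U$ to a representable morphism from a twisted curve with coarse space $C$, I work étale-locally at each $p = \Sigma_j^C$. By the semistable reduction theorem for $\nu$-pointed stable curves, there exists a minimal cyclic base change $\widetilde{C}_p \to \Spec \calO_{C,p}$ of some order $r_p$, totally ramified at $p$, along which the pullback of $f|_U$ extends to a family of $\nu$-pointed stable curves. The resulting classifying map $\widetilde{C}_p \to \overline{\calM}_{\gamma,\nu}$ is $\boldsymbol\mu_{r_p}$-equivariant, so the stack quotient $[\widetilde{C}_p / \boldsymbol\mu_{r_p}]$ provides an étale chart at $p$ for a twisted curve $\calC$ together with a representable local extension of $\psi_U$. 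Gluing these charts with the trivial stack structure over $U$ yields a smooth twisted curve $\calC$ with coarse space $C$, gerbes $\Sigma_j^\calC$ at the $\Sigma_j^C$, and a representable morphism $\psi : \calC \to \overline{\calM}_{\gamma,\nu}$ extending $\psi_U$.

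Now form the pullback $\calX := \calC \times_{\overline{\calM}_{\gamma,\nu}} \calU$ with sections $\calS_i$ pulled back from the universal sections; this is a stack-like fibered surface in the sense of Definition \ref{def:stacklikesurface}. By Corollary \ref{cor:coarsespace} and Proposition \ref{prop:ample}, its coarse space $(X' \to C, \sum S_i' + \sum F_j')$ is an slc fibered surface pair with $\omega_{f'}(S' + F')$ relatively $f'$-ample, and by construction it agrees with $(X \to C, S + F)$ over $U$. The main obstacle is promoting this generic agreement to a global isomorphism of pairs over $C$. For this I invoke Remark \ref{relativelystable}: both $(X,S+F)$ and $(X',S'+F')$ are relatively stable models over $C$, hence canonically isomorphic to
\[
\Proj_C\Bigl(\bigoplus_{m \geq 0} f_* \calO(m(K + S + F))\Bigr).
\]
Since the defining graded algebra is a reflexive $\calO_C$-module determined by its restriction to the dense open $U$, the two models coincide. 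The delicate point in applying this uniqueness is that the ramification orders $r_p$ produced by semistable reduction must be precisely those that cause $\calX \to C$ to have reduced fibers over $\Sigma_j^C$ with coarse space equal to $F_j$; this is ensured by the minimality of the chosen cyclic cover, and confirms that the stack-like fibered surface constructed above has coarse space equal to the original pair.
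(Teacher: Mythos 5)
Your construction of the twisted curve $\calC$, the representable extension $\psi$, and the pullback family $\calX$ follows the same route as the paper (the paper simply cites properness of $\overline{\calM}_{\gamma,\nu}$ where you spell out the local cyclic covers and stack quotients; that part is fine). The gap is in the final identification step. You claim that both $(X,S+F)$ and $(X',S'+F')$ are relatively stable models and then conclude they coincide because the graded algebra $\bigoplus_m f_*\calO(m(K+S+F))$ is ``a reflexive $\calO_C$-module determined by its restriction to the dense open $U$.'' That statement is false: a reflexive (hence locally free) sheaf on a smooth curve is \emph{not} determined by its restriction to a dense open subset --- e.g.\ $\calO_C$ and $\calO_C(p)$ are both locally free extensions of $\calO_U$ across $p$. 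The whole content of the proposition is precisely the comparison of the two algebras \emph{over the marked points} $\Sigma_j^C$, where $X$ and $X'$ are a priori different birational models of the fiber; agreement over $U$ gives you nothing there, and your closing appeal to ``minimality of the cyclic cover'' does not address it (the issue is not the choice of $r_p$ but the identification of the two relative log canonical models across the marked fibers).

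The paper closes this gap with a birational argument you are missing: it takes a resolution of indeterminacies $Z$ of the birational map $\mu : X' \dashrightarrow X$, with $\alpha : Z \to X$ and $\beta : Z \to X'$, and uses that every divisor contracted by $\mu$ or $\mu^{-1}$ lies in a \emph{reduced marked fiber}, hence appears with coefficient $1$ in the corresponding boundary, so that
$$
K_Z + \alpha_*^{-1}S + \alpha_*^{-1}F + \mathrm{Exc}(\alpha) = K_Z + \beta_*^{-1}S' + \beta_*^{-1}F' + \mathrm{Exc}(\beta).
$$
Pushing forward (via the standard slc comparison, Lemma 6.3 of \cite{calculations}) gives $f_*\calO_X(m(K_X+S+F)) = f'_*\calO_{X'}(m(K_{X'}+S'+F'))$ globally on $C$, and only then does the $\Proj_C$ uniqueness of Remark \ref{relativelystable} apply. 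Note this is exactly where the hypothesis that the marked fibers are reduced with coefficient $1$ enters; without it the two relative log canonical algebras could genuinely differ at $\Sigma_j^C$. To repair your proof you need to insert this common-resolution comparison (or an equivalent local computation at each marked fiber) in place of the reflexivity claim.
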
 

\begin{proof} Since the generic fiber of $f$ is a $\nu$-pointed genus $\gamma$ surface, there is a rational map $C \dashrightarrow \overline{\calM}_{\gamma, \nu}$ defined on some open subset $C^0 \subset C$. By properness of the moduli stack of stable curves, there exists a twisted curve $\calC$ with coarse space $C$ and coarse map an isomorphism over $C^0$ such that $C^0 \to \overline{\calM}_{\gamma, \nu}$ extends to $\calC \to \overline{\calM}_{\gamma, \nu}$. We mark $\calC$ by the union of stacky points $\calC \setminus C_0$ and any marked points in $C_0$ lying under stable marked fibers in $\sum F_j$ and denote by $\Sigma_j^\calC$ the corresponding marked points. 

Pulling back the universal family and universal sections on $\overline{\calM}_{\gamma, \nu}$ gives us a stack-like fibered surface
$
(\calX \to \calC, \calS_i, \Sigma_j^\calC).
$
Let $(f' : X' \to C, \sum S_i' + \sum F_j')$ be the corresponding coarse fibered surface pair. Denoting $S = \sum S_i$, $F = \sum F_j$ and similarly for $X'$, there is a birational map $\mu : X' \dashrightarrow X$ over $C$ satisfying
\begin{enumerate}
\item $\mu$ is defined away from the non-stable fibers of $f$,
\item $\mu_*(S' + F') = S + F$. 
\end{enumerate}

Consider a resolution of indeterminacies of the birational morphism $\mu$.
$$
\xymatrix{ & Z \ar[rd]^\alpha \ar[ld]_\beta & \\ X' & & X}
$$
Since the pairs are slc, standard arguments show (see e.g. Lemma 6.3 of \cite{calculations}) that: 
\begin{align*}
\alpha_*\calO_Z(m(K_Z + \alpha_*^{-1} S + \alpha_*^{-1} F + \mathrm{Exc}(\alpha))) &= \calO_X(m(K_X + S + F)) \\
\beta_*\calO_Z(m(K_Z + \beta_*^{-1} S' + \beta_*^{-1} F' + \mathrm{Exc}(\beta))) &= \calO_X(m(K_{X'} + S' + F')).
\end{align*}
However, every divisor contracted by $\mu$ is marked with coefficient 1 in the boundary of $X'$ and is exceptional for $\alpha$ so it appears in $K_Z + \alpha_*^{-1} S + \alpha_*^{-1} F + \mathrm{Exc}(\alpha)$ with coefficient $1$. Similarly for divisors contracted by $\mu^{-1}$ in $X$. Therefore
$$
K_Z + \alpha_*^{-1} S + \alpha_*^{-1} F + \mathrm{Exc}(\alpha) = K_Z + \beta_*^{-1} S' + \beta_*^{-1} F' + \mathrm{Exc}(\beta).
$$
It follows that
$$
f_*\calO_X(m(K_X + S + F)) = f'_*\calO_{X'}(m(K_{X'} + S' + F')).
$$ Since the pairs are relatively stable models over $C$, they must be isomorphic with $\mu$ the isomorphism by Remark \ref{relativelystable}. \end{proof}

\section{Compact moduli of fibered surfaces}\label{sec:stablereduction}

For some purposes, it is beneficial to keep the information of the map $f : X \to C$ as part of the data parametrized by the moduli space. While the stable map $(X, S + F) \to \overline{\calM}_{\gamma, \nu}$ parametrized by $\calA_{\bf{v}}(\overline{M}_{\gamma,\nu})$ often determines the map $X \to C$, this is not always the case, as illustrated by the following example of Abramovich and Vistoli. 

\begin{example}[Abramovich-Vistoli] Let $C$ and $C'$ be two non-isomorphic stable curves of some fixed genus $g$ that become isomorphic over an algebraically closed field. Then $X = C \times C'$ has two non-isomorphic fibrations $X \to C$ and $X \to C'$ that induce the same constant map to $\overline{M}_g$. \end{example}

In this section, we study variations of the moduli problem for fibered surfaces that reintroduce the data of the fibration. 

We begin with a definition of $\calF_{g,n}^{\bf{v}}(\overline{M}_{\gamma,\nu})$. This is the moduli problem for pairs $$(f : X \to C \to \overline{M}_{\gamma,\nu}, S + F)$$ 
where $f : X \to C$ is a $\nu$-pointed genus $\gamma$ fibration with sections $S = \sum_{i = 1}^\nu S_i$ and marked fibers $F = \sum_{j = 1}^n F_j$ such that \begin{enumerate}
\item $(f : X \to C, S + F)$ is twisted, \item $C \to \overline{M}_{\gamma, \nu}$ is the coarse moduli space map, and \item $(X, S+ F) \to \overline{M}_{\gamma, \nu}$ is a stable map with volume $\bf{v}$. \end{enumerate}

\begin{theorem}\label{thm:thm2} The functor $\calF_{g,n}^{\bf{v}}(\overline{M}_{\gamma,\nu})$ is representable by a proper Deligne-Mumford stack with projective coarse space, and the morphism $\varphi: \mc{K}_{g,n}(\overline{\calM}_{\gamma,\nu}) \to \calA_{\bf{v}}(\overline{M}_{\gamma,\nu})$ factors as 
$$
\mc{K}_{g,n}(\overline{\calM}_{\gamma,\nu}) \to \calF_{g,n}^{\bf{v}}(\overline{M}_{\gamma,\nu}) \to \calA_{\bf{v}}(\overline{M}_{\gamma,\nu})
$$
where the first map is surjective and the last map is forgetting the fibration $f : X \to C$. 
\end{theorem}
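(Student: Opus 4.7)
The proof breaks naturally into three steps: constructing the factorization, establishing surjectivity, and deducing properness and projectivity of $\calF$ from those of $\calK$ and $\calA_{\bf{v}}$.

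\textbf{Step 1: Construction of the factorization.} I first construct the morphism $\calK_{g,n}(\overline{\calM}_{\gamma,\nu}) \to \calF_{g,n}^{\bf{v}}(\overline{M}_{\gamma,\nu})$ by sending a family of twisted stable maps $(\calC \to T, \Sigma^\calC) \to \overline{\calM}_{\gamma,\nu}$ to the coarse space $(X \to C \to \overline{M}_{\gamma,\nu}, S+F)$ of the pullback stack-like fibered surface. By Corollary \ref{cor:coarsespace}, this construction commutes with arbitrary base change, so it defines a morphism of stacks, and by Theorem \ref{thm:mainthm} together with Propositions \ref{prop:ample} and \ref{prop:stablemap} the image satisfies conditions (1)--(3) defining $\calF$. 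Composing with the forgetful map $\calF \to \calA_{\bf{v}}$ (which simply forgets the data of the fibration $X \to C$) recovers $\varphi$ by the proof of Corollary \ref{cor:morphism}, giving the claimed factorization.

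\textbf{Step 2: Surjectivity of $\calK \to \calF$.} Proposition \ref{prop:surj} handles the case of a smooth base curve. For a nodal base, I would argue component-by-component: on each irreducible component of $C$, apply Proposition \ref{prop:surj} to obtain a twisted stable map to $\overline{\calM}_{\gamma,\nu}$, and at each node reconstruct the balanced stacky structure on $\calC$ from the local monodromy of the fibration on the two branches (as in the theory of \cite{av2}). These glue to a twisted stable map $\calC \to \overline{\calM}_{\gamma,\nu}$ whose coarse data is the given twisted fibered surface. The twisted conditions (a)--(c) of Definition \ref{def:twistedsurface} translate directly to representability and stability of the resulting map, so this yields an object of $\calK$ above the given object of $\calF$.

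\textbf{Step 3: Properness, DM property, and projectivity.} Here I would show $\calK \to \calF$ is representable and of finite type, making $\calF$ of finite type and DM; indeed, any two lifts of a twisted fibered surface to a twisted stable map differ by the choice of balanced stacky structure on the base, which by \cite{av2} is determined by discrete local monodromy data. Properness of $\calF$ then follows from the valuative criterion: given a one-parameter family of twisted fibered surfaces over the punctured spectrum of a DVR, lift to a family of twisted stable maps using Step 2, extend uniquely to a twisted stable map over the DVR by properness of $\calK$, and take coarse space; uniqueness of the limit follows from Remark \ref{relativelystable}, since the relatively stable model over $C$ is canonical. Projectivity of the coarse space follows from projectivity of $|\calK|$ via the finite surjective morphism $|\calK| \to |\calF|$, or alternatively from projectivity of $|\calA_{\bf{v}}|$ via the forgetful map, whose fibers are controlled by the discrete data of a Stein factorization of $X \to \overline{M}_{\gamma,\nu}$.

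\textbf{Main obstacle.} The most delicate technical point is verifying that $\calK \to \calF$ is finite--in particular, that a twisted fibered surface admits only finitely many lifts to a twisted stable map, or equivalently that the balanced stacky structure on $\calC$ is pinned down (up to a finite ambiguity) by the coarse data of the fibration. The subtlety is that stack-like surfaces can be twisted by finite gerbes without affecting their coarse space; showing that representability of $\calC \to \overline{\calM}_{\gamma,\nu}$ together with stability bounds these twists reduces to the balanced-node analysis of \cite{av2}, and is what ultimately makes the valuative criterion argument work.
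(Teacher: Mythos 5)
Your Steps 1 and 2 (coarse space of the pullback family, plus Proposition \ref{prop:surj} to see that every twisted fibered surface arises this way) do match the ingredients the paper uses for the factorization and surjectivity. The genuine gap is in Step 3, and it is structural: you never construct $\calF_{g,n}^{\bf{v}}(\overline{M}_{\gamma,\nu})$ as an algebraic stack in the first place, yet the first assertion of the theorem is precisely that this functor is representable by a (proper DM) stack. Speaking of the morphism $\calK \to \calF$ being ``representable and of finite type,'' or running a valuative criterion on $\calF$, presupposes that $\calF$ is already known to be algebraic, so your argument is circular at exactly the point where the content lies. The paper resolves this by exhibiting $\calF$ inside an ambient stack: it forms the relative Hom stack $\mc{D}$ over $\calA_{\bf{v}}(\overline{M}_{\gamma,\nu}) \times \calK_{g,n}(\overline{M}_{\gamma,\nu})$ parametrizing triples $(X \to \overline{M}_{\gamma,\nu},\ C \to \overline{M}_{\gamma,\nu},\ X \to C)$, identifies $\calF$ with the image of the coarse-space morphism $\calK_{g,n}(\overline{\calM}_{\gamma,\nu}) \to \mc{D}$ via Proposition \ref{prop:surj}, and then gets properness for free (image of a proper stack), hence closedness in $\mc{D}$ and the DM property. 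Your valuative-criterion route also quietly needs a family version of the lifting statement (as in Proposition \ref{prop:stable1}), not just the pointwise Proposition \ref{prop:surj}.

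The projectivity argument is also flawed as stated. Your first route, descending projectivity along a ``finite surjective'' map $|\calK| \to |\calF|$, does not work: finiteness of that map is not established (neither you nor the paper proves it), and even granting it, projectivity does not descend along a finite surjection unless you produce an ample line bundle on the target; the usable direction is a finite morphism \emph{to} a projective space. That is the paper's route: it proves the forgetful map $\calF \to \calA_{\bf{v}}(\overline{M}_{\gamma,\nu})$ is quasi-finite, hence finite on coarse spaces (both being proper), and pulls back an ample bundle from the projective coarse space of $\calA_{\bf{v}}$. Your justification of that quasi-finiteness via ``the Stein factorization of $X \to \overline{M}_{\gamma,\nu}$'' cannot work: when $C \to \overline{M}_{\gamma,\nu}$ contracts components (e.g.\ the isotrivial Abramovich--Vistoli example $X = C \times C'$ with constant map, quoted in Section \ref{sec:stablereduction}), the Stein factorization does not recover the base curve and genuinely distinct fibrations induce the same stable map. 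The paper's actual argument is a rigidity statement: a deformation of $f$ fixing $(X, S+F)$ moves fibers with trivial normal bundle, and since all fibers are algebraically equivalent any such deformation factors as $X \to C \to C'$, so it is induced by an automorphism of the stable pointed curve $C$, of which there are only finitely many. Some argument of this kind is needed and is missing from your proposal.
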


\begin{proof} We can consider the product $\calA_{\bf{v}}(\overline{M}_{\gamma,\nu}) \times \calK_{g,n}(\overline{M}_{\gamma,\nu})$. Over this we have a universal surface, a universal curve, as well as universal maps to $\overline{M}_{\gamma,\nu}$. Taking the relative Hom stack over the product, we obtain a Deligne-Mumford stack $\mc{D}$ locally of finite type paramaterizing triples
$$
(X \to \overline{M}_{\gamma,\nu}, C \to \overline{M}_{\gamma,\nu}, X \to C)
$$
consisting of an Alexeev stable map of volume $v$, a Kontsevich stable map, and a morphism $X \to C$. 

Note that $\mc{F}^{\bf{v}}_{g,n}(\overline{M}_{\gamma,\nu})$ is a substack of $\mc{D}$. On the other hand, taking coarse space yields a morphism $\calK_{g,n}(\overline{\calM}_{\gamma,\nu}) \to \mc{D}$. By Proposition \ref{prop:surj}, the image of this morphism is $\mc{F}^{\bf{v}}_{g,n}(\overline{M}_{\gamma,\nu})$. Since $\calK_{g,n}(\overline{\calM}_{\gamma,\nu})$ is proper, it follows that $\mc{F}^{\bf{v}}_{g,n}(\overline{M}_{\gamma,\nu})$ is also proper. In particular, it is a closed substack of $\mc{D}$ and so is itself a Deligne-Mumford stack. 

Composing with the projection $\mc{D} \to \calA_{\bf{v}}(\overline{M}_{\gamma,\nu}) \times \calK_{g,n}(\overline{M}_{\gamma,\nu}) \to \calA_{\bf{v}}(\overline{M}_{\gamma,\nu})$ yields the map $\mc{F}^{\bf{v}}_{g,n}(\overline{M}_{\gamma,\nu}) \to \calA_{\bf{v}}(\overline{M}_{\gamma,\nu})$. The factorization is clear by construction. \\

To demonstrate projectivity, we show that the morphism $\calF_{g,n}^{\bf{v}}(\overline{M}_{\gamma,\nu}) \to \calA_{\bf{v}}(\overline{M}_{\gamma,\nu})$ is quasi-finite. This implies that it is finite on the level of coarse spaces, yielding projectivity as both spaces are proper and $\calA_{\bf{v}}(\overline{M}_{\gamma,\nu})$ has a projective coarse moduli space by \cite[Theorem 4.2]{mgnw}. This is tantamount to showing that given a twisted fibered surface pair $(f: X \to C, S + F)$, there are finitely many $\nu$-pointed genus $\gamma$-fibrations $f' : X \to C'$  over an $n$-pointed genus $g$ stable curve making $(f' : X \to C', S + F)$ into a twisted fibered surface pair. 

The key point is that the space of deformations of $(f: X \to C, S + F)$ fixing $(X, S + F)$ is zero dimensional. Given that, the statement follows as $\calF_{g,n}^{\bf{v}}(\overline{M}_{\gamma,\nu})$ is finite type. The deformations of the map $X \to C$ deform the fibers of $X \to C$ so that they remain with trivial normal bundle. Therefore, as all fibers are algebraically equivalent, they must all be contracted in any deformation. That is, any $f' : X \to C'$ deforming $f$ must factor through as $X \to C \to  C'$. This implies all deformations of $f$ fixing $(X, S + F)$ are induced by automorphisms of $C$ preserving the marked points lying under $F$. But $C$ (with these marked points) is a stable pointed curve and so it has discrete automorphism group.

\end{proof}

We note that by construction there is a morphism $\calF_{g,n}(\overline{M}_{\gamma,\nu}) \to \calK_{g,n}(\overline{M}_{\gamma,\nu})$. This map is representable by Deligne-Mumford stacks. Indeed the fiber over a stable map $g: (C,\Sigma_i^C) \to \overline{M}_{\gamma,\nu}$ is a closed substack of the stack of Alexeev stable maps to $C$ with polarization coming from $\omega_C(\Sigma^C) \otimes g^*H$ for some fixed very ample $H$ on $\overline{M}_{\gamma,\nu}$. 
\subsection{From stable maps to stable pairs} 

Another interesting variant is the moduli space of triples $(f : X \to C, S + F)$ where $(X, S + F)$ is a stable pair, that is, $(X, S + F) \to \Spec k$ is a stable map to a point. This moduli problem is obtained from the previous one by forgetting map $X \to \overline{M}_{\gamma,\nu}$ and taking the stable model (over a point) of the surface pair. One should think of this as an analogue of the morphism $\calK_{g,n}(V) \to \overline{\calM}_{g,n}$ obtained by taking a stable map $(C, \Sigma_i^C) \to V$ to the stabilization of the prestable curve curve $(C, \Sigma_i^C)$. 

However, the situation is much more complicated in dimension $2$. First, there is no reason to expect that process of stabilizing the pair $(X, S + F)$ is functorial: it may not commute with base-change in families. A more fundamental issue is that it is unclear that the stabilization of $(X, S + F)$ preserves the structure of a fibration to a curve and, even if it does, one must identify the resulting fibrations. The next proposition is a key step in resolving this issue. \\

First we recall some facts about stable curves. For a pointed nodal curve $(C, \Sigma_i^C)$, we form the dual graph of $C$ by assigning a vertex to each irreducible component of $C$ and an edge for each node. For $C_\alpha \subset C$ an irreducible component, we denote by $v_\alpha$ the valence of $C_\alpha$ in the dual graph and marked points lying on $C_\alpha$ by $n_\alpha$. Then it is clear that $(C, \Sigma_i^C)$ is stable if and only if
$
2g(C_\alpha) - 2 + v_\alpha + n_\alpha > 0
$
for all $\alpha$. Here $g(C_\alpha)$ is the geometric genus of $C_\alpha$. 

\begin{prop}\label{prop:basecurve} Let $(f : X \to C, S + F)$ be a twisted fibered surface pair. Let $C_\alpha$ be any component of $C$. Then for any component of the section $S_\alpha$ lying above $C_\alpha$, we have
$$
(K_X + S + F).S_\alpha = 2g(C_\alpha) - 2 + v_\alpha + n_\alpha.
$$
\end{prop}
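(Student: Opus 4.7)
The plan is to lift the intersection $(K_X + S + F) \cdot S_\alpha$ to the stacky fibered surface $g: \calX \to \calC$ produced by Proposition~\ref{prop:surj}, where the sections $\calS_i$ lie in the smooth locus of $g$, and then descend via the projection formula for the coarse moduli maps $\psi: \calX \to X$ and $\pi: \calC \to C$.

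First, both $\psi^*(K_X + S + F)$ and $K_\calX + \calS + \widetilde{F}$ (with $\widetilde{F} = g^*\Sigma^\calC$) are $\Q$-Cartier divisors on $\calX$ that agree on the dense open locus where $\psi$ is an isomorphism, hence they are equal as $\Q$-Cartier divisors. Taking $\calS_{i,\alpha}$ to be the component of $\calS_i$ above $\calC_\alpha$ (so that $\psi_*\calS_{i,\alpha} = S_\alpha$), the projection formula gives
\[
(K_X + S + F) \cdot S_\alpha = (K_\calX + \calS + \widetilde{F}) \cdot \calS_{i,\alpha}.
\]

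Next, since $g$ is smooth along the section $\calS_i$, adjunction for a section of a smooth morphism yields $\sigma_i^*\omega_g(\calS) = \calO_\calC$. Combined with $\sigma_i^*\widetilde{F} = \Sigma^\calC$ (using $g \circ \sigma_i = \operatorname{id}_\calC$) and $\omega_\calX = \omega_g \otimes g^*\omega_\calC$, I obtain
\[
\omega_\calX(\calS + \widetilde{F})|_{\calS_{i,\alpha}} = \omega_\calC(\Sigma^\calC)|_{\calC_\alpha}
\]
via the canonical isomorphism $\sigma_i|_{\calC_\alpha}: \calC_\alpha \xrightarrow{\sim} \calS_{i,\alpha}$, reducing the problem to computing $\deg_{\calC_\alpha}\omega_\calC(\Sigma^\calC)|_{\calC_\alpha}$.

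Finally, the key identity to establish is $\omega_\calC(\Sigma^\calC) = \pi^*\omega_C(\Sigma^C)$ as $\Q$-Cartier divisors on $\calC$. At a smooth stacky point of order $r_j$, the local calculation $dy = r_j x^{r_j-1}dx$ gives the twist $\omega_\calC = \pi^*\omega_C + (r_j-1)\Sigma_j^\calC$, which cancels precisely against the ramification $\pi^*\Sigma_j^C = r_j\Sigma_j^\calC$; at a stacky node both $\omega_\calC$ and $\pi^*\omega_C$ are locally generated by $du/u$, contributing no correction. Since $\pi$ has degree one on coarse spaces, the required degree equals $\deg_{C_\alpha}\omega_C(\Sigma^C)|_{C_\alpha}$, which by the standard residue/adjunction formula on the nodal curve $C$ equals $(2g(C_\alpha) - 2 + v_\alpha) + n_\alpha$.

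The hardest step will be the careful local accounting of orbifold corrections at every stacky point of $\calC$, ensuring in particular that the twist in $\omega_\calC$ cancels exactly against the ramification of $\pi^*\Sigma^C$ at smooth stacky points and that stacky nodes produce no discrepancy.
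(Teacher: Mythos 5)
Your stacky computation over the base (adjunction along the section, $\widetilde{F}_j\cdot\calS_i = \Sigma_j^\calC$, the identity $\omega_\calC(\Sigma^\calC) = \pi^*\omega_C(\Sigma^C)$, and the fact that $\pi$ has degree one) is essentially the paper's argument, but only its second half, and there is a genuine gap at the start. You invoke Proposition \ref{prop:surj} to produce the stack-like model $\calX \to \calC$, yet that proposition is proved only for twisted fibered surface pairs over a \emph{smooth} curve $C$, while the statement you are proving is precisely about a nodal base -- that is where $v_\alpha$ comes from. Over a node of $C$ the fiber of $f$ need not be stable (on the boundary it is typically a twisted fiber), and building a stacky model there requires matching cyclic structures along the two branches at the node; neither the paper nor your plan establishes such an extension of Proposition \ref{prop:surj}. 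The paper avoids this entirely: it first pulls back along the normalization $\nu_\alpha : C'_\alpha \to C_\alpha$ and checks $\phi^*(K_X + S + F) = K_{X'_\alpha} + \phi^*S + \phi^*F + G$, where $G$ is the sum of the $v_\alpha$ fibers over the preimages of the nodes. These fibers become new marked fibers, $(X'_\alpha \to C'_\alpha, \phi^*S + \phi^*F + G)$ is a twisted fibered surface over a smooth curve, the term $v_\alpha$ enters through $G$, and only then is Proposition \ref{prop:surj} applied. Your direct treatment of stacky nodes in comparing $\omega_\calC$ with $\pi^*\omega_C$ is a correct local computation, but it presupposes the stacky model over the nodal base, which is exactly the missing step; either prove that extension or reduce to the smooth case by normalization as the paper does.

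A smaller but real flaw: your justification of $\psi^*(K_X + S + F) = K_\calX + \calS + \widetilde{F}$, namely that the two sides ``agree on the dense open locus where $\psi$ is an isomorphism,'' is insufficient, because the locus where the coarse map $\psi$ fails to be an isomorphism is divisorial -- it contains the twisted fibers. Equality of the two $\Q$-Cartier divisors requires comparing coefficients along those fibers, i.e.\ the Hurwitz-type computation using that $F$ is reduced and $\widetilde{F} = g^*\Sigma^\calC$, which is exactly how the paper obtains $\psi^*\omega_f(S+F) = \omega_g(\calS + \widetilde{F})$ in Proposition \ref{prop:ample} and reuses it in Propositions \ref{prop:stablemap} and \ref{prop:basecurve}. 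With that argument cited the identity is fine, but as stated your step does not justify it.
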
 

\begin{proof} First take the normalization $\nu : \bigsqcup_{\beta}C'_\beta \to C$ where $\nu_\beta : C_\beta' \to C_\beta$ is the normalization of each irreducible component of $C$ and $\nu = \bigsqcup \nu_\beta$. Consider the pullback
$$
\xymatrix{X'_\alpha \ar[r]^{\phi} \ar[d]_{f'} & X \ar[d]^f \\ C'_\alpha \ar[r]^{\nu_\alpha} & C}.
$$
Then $f'$ is a $\nu$-pointed genus $\gamma$ fibration over a smooth genus $g(C_\alpha)$ curve with sections corresponding to the components $S_\alpha$ of $S$ lying over $C_\alpha$. 

Furthermore, $\phi^*(K_X + S + F) = K_{X'_\alpha} + G + \phi^*S + \phi^*F$ where $G$ are the $v_\alpha$ many fibers lying over the points of $C'_\alpha$ mapped to the nodes of $C$. Therefore $(f' : X'_\alpha \to C'_\alpha, \phi^*S + \phi^*F + G)$ is a twisted fibered surface pair over a smooth curve with sections $\phi^*S$ and marked fibers $\phi^*F + G$. Furthermore, $\phi$ is finite with generic degree one on any section of $f'$ so
$$
(K_X + S + F).S_\alpha = (K_{X'_\alpha} + \phi^*S + \phi^*F + G).\phi^*(S_\alpha)
$$
for any component of the section $S_\alpha$ lying over $C_\alpha$.

Therefore, it suffices to prove the formula in the case of a twisted fibered surface over a smooth base curve. Thus, suppose $(f : X \to C, S + F)$ is a twisted surface over a smooth curve $C$. By Proposition \ref{prop:surj}, there is a stack-like fibered surface $(g: \calX \to \calC, \calS_i, \Sigma_j^\calC)$ whose coarse space is $(f : X \to C, S + F)$.  Consider the diagram
$$
\xymatrix{\calX \ar[r]^\psi \ar[d]_g & X \ar[d]^f \\ \calC \ar[r]^\pi & C.} 
$$
As in the proof of Proposition \ref{prop:stablemap}, the equality $\psi^*\omega_X(S + F) = \omega_\calX(\calS + \widetilde{F})$ holds, where $\widetilde{F}_1, \ldots, \widetilde{F}_n$ are the marked fibers of $g$ lying over $F_1, \ldots, F_n$. Then by the projection formula, it suffices to compute the degree of $\omega_\calX(\calS + \widetilde{F})|_{\calS_i}$ for any section $\calS_i \cong \calC$. Since $g$ is a family of stable curves, the divisor $\calS_i$ passes through the smooth locus of $\calX$, and so is a Cartier divisor. In particular, the adjunction formula holds and $\widetilde{F}_j.\calS_i = \Sigma_j^\calC$ for each $j$. Therefore, 
$$
\omega_\calX(\calS + \widetilde{F})|_{S_j} = \omega_\calC\left(\sum_j \Sigma_j^\calC\right) = \pi^*\omega_C\left(\sum_j \Sigma_j^C\right)
$$
and the desired formula follows since the degree of $\pi$ is $1$. \end{proof} 

Consequently, we see that if $(f : X \to C, S + F)$ is a twisted fibered surface, the process of taking the stable model contracts components of the section in $X$ if and only if those components of $C$ must be contracted to stabilize the curve. Therefore, the stable model of $(X, S + F)$ has a map to the stabilization of $(C, \Sigma_i^C)$. 

Let $X_\alpha \to C_\alpha$ be a component of the twisted surface. When
$
2g(C_\alpha) - 2 + v_\alpha + n_\alpha \le 0
$
so that $C_\alpha$ and all the marked sections of $X_\alpha \to C_\alpha$ get contracted, the resulting surface component must be mapped to a point in the stabilization of $C$. In particular, the stable model of $(f : X \to C, S + F)$ is no longer equidimensional morphism! This motivates the following preliminary definition of a moduli functor for twisted fibered surfaces without the map to $\overline{M}_{\gamma,\nu}$:

\begin{definition}\label{def:fibration} A \textbf{$\nu$-pointed genus $\gamma$ slc fibration} is a pair $(f : X \to C, S + F)$ where $f : X \to C$ is a projective morphism with connected fibers, $S = \sum_{i = 1}^\nu S_i$ is a sum of sections and $F = \sum_{i = 1}^n F_i$ is a sum of reduced marked vertical divisors such that
\begin{enumerate}[label = (\alph*)]
\item $(X, S + F)$ is an slc pair and $(C, \Sigma_i^C)$ is a pointed nodal curve, and
\item every component of $X$ is either a $\nu$-pointed genus $\gamma$ twisted fibered surface pair, or a surface pair contracted to a point by $f$. 
\end{enumerate}
We say $(f : X \to C, S + F)$ is \textbf{stable} if $(X, S + F)$ is a stable pair. 
\end{definition}

\begin{remark} By Proposition \ref{prop:basecurve}, the base curve $(C, \Sigma_i^C)$ of an slc fibration $(f : X \to C, S + F)$ is a stable curve. \end{remark} 

\begin{definition}\label{def:ffunctor} Consider the moduli stack of twisted stable $\nu$-pointed genus $\gamma$ slc fibrations of volume $v$ over an $n$-pointed genus $g$ stable curve as in the above definition. Denote the closure of the substack where $f$ is equidmensional and $C$ is smooth by $\calF_{g,n}^v(\gamma,\nu)$. \end{definition}

If $(f : X \to C, S + F)$ is an slc fibration where $f$ is equidimensional, then every component of $f$ is a twisted fibered surface of fixed genus and number of sections. Therefore, $(f : X \to C, S + F)$ is itself a twisted fibered surface. Thus the stack $\calF_{g,n}(\gamma,\nu)$ parametrizes stable degenerations of twisted fibered surfaces where we forget the coarse moduli map $C \to \overline{M}_{\gamma, \nu}$. As implied by Proposition \ref{prop:basecurve}, the stable limits may \emph{no longer} have the structure of a twisted fibered surface. 

\begin{prop}\label{prop:thm3} The functor $\calF_{g,n}^{v}(\gamma,\nu)$ is representable by a finite type Deligne-Mumford stack. \end{prop}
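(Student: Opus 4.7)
The plan is to construct $\calF_{g,n}^{v}(\gamma,\nu)$ as a locally closed substack of a relative Hom stack over a product of two known Deligne--Mumford moduli stacks, mirroring the approach used in the proof of Theorem~\ref{thm:thm2}. Let $\calM_v$ denote the moduli stack of stable surface pairs $(X, S+F)$ of volume $v$ with the prescribed decomposition of the reduced boundary into $\nu$ marked sections and $n$ marked divisors; this is a finite type DM stack by Theorem~\ref{alexeev} and its refinements. Together with $\overline{\calM}_{g,n}$, take the universal surface pair $(\calX, \calS + \calF)$ and the universal pointed stable curve $(\calC, \Sigma^\calC)$ pulled back to $\calM_v \times \overline{\calM}_{g,n}$.

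Over this product I would form the relative Hom stack $\mc{D} := \mathrm{Hom}(\calX, \calC)$ parametrizing morphisms. Since both families are proper and flat and the numerical class of any $f$ satisfying the conditions below is bounded by $v$, $g$, $n$, $\gamma$, and $\nu$ (every fiber of $f$ is algebraically equivalent to a fiber of a twisted genus $\gamma$, $\nu$-pointed fibered surface, and its Hilbert polynomial is thereby determined), $\mc{D}$ is a Deligne--Mumford stack of finite type. I would then impose the conditions of Definition~\ref{def:fibration}: $f$ has connected fibers, each $S_i$ is a section of $f$ sitting over a section of $\calC \to \overline{\calM}_{g,n}$, each $F_j$ equals $f^{-1}(\Sigma_j^C)_{\mathrm{red}}$, and each component of a geometric fiber is either contracted by $f$ or is itself a $\nu$-pointed genus $\gamma$ twisted fibered surface. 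These are locally closed conditions in families and so cut out a locally closed substack $\calF' \subset \mc{D}$.

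Taking the closure in $\calF'$ of the open substack where $C$ is smooth and $f$ is equidimensional yields the desired $\calF_{g,n}^{v}(\gamma,\nu)$. As a closed substack of the finite type DM stack $\calF'$, it is itself a finite type Deligne--Mumford stack, with the universal family inherited by pullback from $\calM_v \times \overline{\calM}_{g,n}$.

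The principal obstacle will be verifying that condition (b) of Definition~\ref{def:fibration} is genuinely locally closed on $\mc{D}$, because it mixes an open condition on components that remain twisted fibered surfaces (requiring relative ampleness of $\omega_f(S+F)$, cf.\ Definition~\ref{def:twistedsurface}(c)) with a closed condition that the other components be contracted to points in $C$. Controlling these conditions uniformly in families, together with the boundedness argument needed to see that $\mc{D}$ is itself of finite type, forms the main technical content of the proof.
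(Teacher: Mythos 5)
Your construction mirrors the paper's: form the relative Hom stack $\mc{D}$ over $\calA_v \times \overline{\calM}_{g,n}$ (your $\calM_v$ is the KSBA stack $\calA_v$) and take the closure of the good locus. But the step you yourself flag as the ``principal obstacle'' --- that the conditions of Definition \ref{def:fibration}, in particular condition (b), cut out a \emph{locally closed} substack of $\mc{D}$ --- is exactly the step you do not carry out, and it is doubtful as stated: which components of a fiber of $f$ are contracted and which remain twisted fibered surfaces can jump in families, the marked divisors are required to be \emph{reduced} preimages, and relative ampleness of $\omega_f(S+F)$ is only imposed component-by-component on the non-contracted part, so there is no evident way to exhibit this locus as an intersection of an open and a closed condition. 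The paper circumvents this entirely: by Proposition \ref{prop:surj} every equidimensional twisted fibered surface over a stable base carries a unique coarse map $C \to \overline{M}_{\gamma,\nu}$, so the locus in $\mc{D}$ where $f$ is equidimensional and twisted is the \emph{image}, under the natural forgetful morphism, of the open substack of the stack $\calF^{\bf{v}}_{g,n}(\overline{M}_{\gamma,\nu})$ already constructed in Theorem \ref{thm:thm2} where the base curve is stable. Being the image of a finite type stack, this locus is constructible (and quasi-compact), so its closure is a closed, finite type substack of $\mc{D}$, which is what Definition \ref{def:ffunctor} calls $\calF^v_{g,n}(\gamma,\nu)$. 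That use of the previously built space of maps to supply constructibility, rather than a direct local-closedness verification, is the missing idea in your proposal.

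A secondary point: your boundedness argument for $\mc{D}$ being of finite type presupposes the very conditions you have not yet cut out, so at best it bounds the relevant locus, not $\mc{D}$; the paper only needs $\mc{D}$ locally of finite type, with finite-typeness of $\calF^v_{g,n}(\gamma,\nu)$ again coming from the fact that the distinguished locus is the image of a finite type (indeed proper) stack. With the constructibility argument supplied in place of your locally-closed claim, the rest of your outline goes through.
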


\begin{proof} The construction is analogous to that of $\calF^{\bf{v}}_{g,n}(\overline{M}_{\gamma,\nu})$. We let $\calA_v$ be the Koll\'ar-Shepherd-Barron-Alexeev (KSBA) moduli space of stable surface pairs of volume $v$, i.e. $\calA_{\bf{v}}(V)$ when $V$ is a point. Then over the product $\calA_v \times \overline{\calM}_{g,n}$ there is a universal surface, a universal curve, and the relative Hom stack $\mc{D}$ parametrizing $(f : X \to C, S + F)$ with no conditions on the map $f$. 

Now consider the locus in $\mc{D}$ where $f$ is equidimensional and $(f : X \to C, S + F)$ is a twisted fibered surface. Then there is a unique coarse moduli space map $C \to \overline{M}_{\gamma, \nu}$ which is necessarily stable since $(C, \Sigma_i^C)$ is a stable curve. Therefore this locus is the image (under the natural forgetful map) of the open substack of $\calF^{\bf{v}}_{g,n}(\overline{M}_{\gamma,\nu})$ where the base curve is stable. In particular, it is constructible in $\mc{D}$ and we take $\calF^v_{g,n}(\gamma, \nu)$ to be its closure.  \end{proof}

By construction, there are projections $\calF_{g,n}^v(\gamma,\nu) \to \calM_{g,n}$ and $\calF_{g,n}^v(\gamma, \nu) \to \calA_v$.  

\begin{remark}\label{rmk:thm3} Unlike $\calF^{\bf{v}}_{g,n}(\overline{M}_{\gamma,\nu})$, the space $\calF^v_{g,n}(\gamma,\nu)$ is not manifestly proper. In the next section we show that one can still use twisted stable maps to prove stable reduction for $\calF^v_{g,n}(\gamma,\nu)$. \end{remark}

\subsection{Stable reduction} The following proposition allows us to lift a family of twisted fibered surfaces to a family of stable maps. 

\begin{prop}\label{prop:stable1} Let $B$ be a smooth curve and let $(X \to C, S + F) \to B$ be a family of twisted fibered surfaces so that $C_\eta \to \eta$ is smooth for $\eta \in B$ the generic point. Then $(X \to C, S + F)$ is the coarse space of a family of stack-like fibered surfaces $\calX \to \calC \to B$ induced by a morphism $(\calC,\Sigma_i^\calC) \to \overline{\calM}_{\gamma,\nu}$. 
\end{prop}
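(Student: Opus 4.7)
\emph{Proof proposal.} The plan is to invoke properness of $\calK_{g,n}(\overline{\calM}_{\gamma,\nu})$ to lift the family, and then identify the resulting coarse space with the given one using the uniqueness assertions of Proposition \ref{prop:surj} and Remark \ref{relativelystable}. First I would apply Proposition \ref{prop:surj} to the generic fiber $(X_\eta \to C_\eta, S_\eta + F_\eta)$, producing a stack-like fibered surface $(\calX_\eta \to \calC_\eta, \calS_i^\eta, \Sigma_j^{\calC_\eta})$ induced by a twisted stable map $\calC_\eta \to \overline{\calM}_{\gamma,\nu}$, and hence a point $\eta \to \calK_{g,n}(\overline{\calM}_{\gamma,\nu})$.

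Next, by properness of $\calK_{g,n}(\overline{\calM}_{\gamma,\nu})$ and the valuative criterion, after a finite surjective base change $\pi : B' \to B$ this $\eta$-point extends uniquely to a $B'$-point. Let $\calC' \to B'$ and $\calC' \to \overline{\calM}_{\gamma,\nu}$ denote the resulting family of twisted stable maps. Pulling back the universal family yields a stack-like fibered surface $\calX' \to \calC' \to B'$ with coarse space $(X' \to C' \to B', S' + F')$.

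The core step is to identify $(X' \to C' \to B')$ with the pullback $(X \times_B B' \to C \times_B B' \to B')$ of the original family. Both coarse base curves $C'$ and $C \times_B B'$ are families of $n$-pointed stable nodal curves of genus $g$ over $B'$ (using here that in the stable reduction setup the base curve of a twisted fibered surface is a stable pointed nodal curve, as follows from Proposition \ref{prop:basecurve}) agreeing over $\eta'$, so they coincide by separatedness of $\overline{\calM}_{g,n}$. Given this identification, $X'$ and $X \times_B B'$ are each the relatively stable model of the same twisted fibered surface pair over the same base curve, so they are canonically isomorphic by Remark \ref{relativelystable}.

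Finally, by separatedness of $\calK_{g,n}(\overline{\calM}_{\gamma,\nu})$, the extension $\calC'$ is uniquely determined by its generic fiber and is therefore $\mathrm{Aut}(B'/B)$-equivariant; this furnishes descent data for $\calC'$ along $\pi$, yielding the desired twisted curve $\calC \to B$ with coarse space $C$, together with the stack-like fibered surface $\calX \to \calC \to B$ whose coarse space is the original family. The main obstacle is the coarse-curve identification: without stability of $(C_b, \Sigma_i^{C_b})$ the coarse curve $C'$ produced by stable reduction of the generic twisted stable map would differ from $C \times_B B'$ by contracting unstable components, and one would only recover $\calC$ over a partial stabilization of the original family.
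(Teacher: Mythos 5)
Your route is genuinely different from the paper's, and it has real gaps. The paper never invokes properness of $\calK_{g,n}(\overline{\calM}_{\gamma,\nu})$ and never changes the base $B$: it extends the classifying map $U = C\setminus(\Sigma\cup N)\to\overline{\calM}_{\gamma,\nu}$ directly over the total space of $C\to B$, by passing to the root stack along the markings, taking the canonical stack at the finitely many nodes $N$, and applying the Abramovich--Vistoli purity lemma; $\calX$ is then recovered as the normalized pullback of the universal family, and normality of $X$ identifies the coarse space. Against this, your key identification step fails as written. Proposition \ref{prop:basecurve} does \emph{not} say that the base curve of a twisted fibered surface is a stable pointed curve: it gives $(K_X+S+F).S_\alpha = 2g(C_\alpha)-2+v_\alpha+n_\alpha$, which yields stability of $(C,\Sigma^C)$ only when $(X,S+F)$ is a stable pair, whereas the proposition only assumes relative stability (e.g.\ an unmarked, even isotrivial, elliptic surface over $\PP^1$ is a perfectly good twisted fibered surface with unstable base). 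So separatedness of $\overline{\calM}_{g,n}$ cannot be used to identify $C'$ with $C\times_B B'$; worse, in such examples the induced coarse map need not be a Kontsevich stable map, so your initial $\eta$-point of $\calK_{g,n}(\overline{\calM}_{\gamma,\nu})$ need not even exist at the stated level of generality. Comparing instead with limits of stable maps would require a map $C\to\overline{M}_{\gamma,\nu}$ defined on the whole given family, but that is precisely the corollary the paper deduces \emph{from} this proposition, so that route is circular. Without the curve identification, Remark \ref{relativelystable} does not apply, and even with it, uniqueness of a relatively stable \emph{limit} over a fixed base (two families agreeing only over a dense open of $B'$) is not what that remark asserts.

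The second gap is the return from $B'$ to $B$. The proposition must produce $\calC\to B$ over the original $B$ with coarse space exactly $C$ --- this is what Corollary \ref{cor:proper} relies on before it makes its own base change. The cover $B'\to B$ supplied by the valuative criterion is in general ramified and need not be Galois, and mere $\mathrm{Aut}(B'/B)$-equivariance is not descent data along such a cover; quotienting instead produces a stack over $[B'/G]$, which differs from $B$ at branch points and introduces stabilizers along whole fibers there, violating the requirement that $\calC\to C$ be an isomorphism over the generic point of each component. The content of the paper's purity argument is exactly that no base change of $B$ is needed, because the special fibers are already twisted (their local monodromy is absorbed by stacky structure in the fiber direction); your argument constructs the object only after a base change and supplies no mechanism to descend it.
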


\begin{proof} Let $\Sigma_i^C = f_*F_i$ be the marked points of $C$ lying under the marked fibers of $f: X \to C$. Then $X \setminus F \to C \setminus (\Sigma \cup N)$ is a family of stable curves where
$$
\Sigma := \bigcup_i \Sigma_i^C
$$
and $N$ is the necessarily finite set of points $p$ that are nodes of $C_b$ for some $b \in B$. Thus we have a map $U := C \setminus (\Sigma \cup N) \to \overline{\calM}_{\gamma,\nu}$. 

It suffices to find a stack $\calC \to B$ flat over $B$ with coarse space $\calC \to C$ an isomorphism over $U$ so that the map $U \to \overline{\calM}_{\gamma,\nu}$ extends to a representable morphism $(\calC, \Sigma_i^\calC) \to \overline{\calM}_{\gamma,\nu}$. Indeed given such an extension, there is an isomorphism
$$
\calX \cong \left(\calC \times_C X\right)^\nu
$$
where $\calX$ is the pullback of the universal family on $\overline{\calM}_{\gamma,\nu}$. Then the map $\calX \to X$ factors as $\calX \to Y \to X$ where $\calX \to Y$ is the coarse space. The map $Y \to X$ is finite and is an isomorphism over $U$. Since $X$ is normal the map $Y \to X$ is an isomorphism.

To construct the extension, consider $(C_\eta, \Sigma_i^\eta)$ the generic fiber of $C \to B$. Then there is a map $C_\eta \setminus \Sigma^\eta \to \overline{\calM}_{\gamma,\nu} \times_k \eta$. This extends uniquely to a representable morphism $(\calC_\eta, \Sigma_i^{\calC_\eta}) \to  \overline{\calM}_{\gamma,\nu} \times_k \eta$ by properness of the moduli of stable curves. Indeed $C_\eta \setminus \Sigma^\eta$ is a smooth punctured curve and so, by stable reduction, the around each puncture $\Sigma_i^C$, the morphism extends after a ramified cyclic cover of some order $n_i$. That is, the morphism extends over $\calC_\eta$ the root stack of $(C_\eta, \Sigma_i^\eta)$ of order $n_i$ at each point. 

Therefore there is an open subset $V \subset B$ such that the morphism extends to the preimage of $V$ in the corresponding root stack $\calC'$ of $(C, \Sigma_i^C)$. By assumption, the morphism was defined away from finitely many points on the complement of $V$. Since $\calC'$ is a smooth stack away from the points $N$, we may apply the purity lemma of Abramovich-Vistoli \cite[Lemma 2.4.1]{av} to extend the morphism over all points except possibly the preimages of $N$. At these points, although $\calC'$ may fail to be smooth, it will have quotient singularities. Therefore we may take the canonical stack $\calC$ of $\calC'$. Then $\calC$ is smooth so by the purity lemma the morphism extends over $\calC$, and $(\calC, \Sigma_i^{\calC}) \to B$ is a flat family of nodal stacky curves. \end{proof} 

\begin{cor} Let $(X \to C, S + F) \to B$ be a family of twisted fibered surfaces over a smooth curve $B$ such that the generic fiber of $C \to B$ is smooth. Then there exists a well defined coarse moduli space map $C \to \overline{M}_{\gamma, \nu}$. \end{cor}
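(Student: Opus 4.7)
The plan is to deduce this corollary directly from Proposition \ref{prop:stable1} using the universal property of coarse moduli spaces. Since the hypotheses of Proposition \ref{prop:stable1} are exactly the ones given here, I first invoke that proposition to produce a family of stack-like fibered surfaces $\calX \to \calC \to B$ together with a representable morphism $\psi : (\calC, \Sigma_i^\calC) \to \overline{\calM}_{\gamma,\nu}$ whose associated coarse space recovers $(X \to C, S + F)$.

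Next I compose $\psi$ with the coarse moduli space map $\overline{\calM}_{\gamma,\nu} \to \overline{M}_{\gamma,\nu}$ to obtain a morphism $\calC \to \overline{M}_{\gamma,\nu}$. Because $\overline{M}_{\gamma,\nu}$ is a scheme (in particular an algebraic space), the universal property of the coarse moduli space $\pi : \calC \to C$ ensures that this morphism factors uniquely through $\pi$, yielding a morphism $C \to \overline{M}_{\gamma,\nu}$. This is the desired coarse moduli map.

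To see that this map is well-defined (i.e., independent of the auxiliary choice of $\calC$), I would note that over the dense open locus $U \subset C$ where the fibers of $f : X \to C$ are already stable curves, the map $U \to \overline{M}_{\gamma,\nu}$ is determined intrinsically by the family $X|_U \to U$. Since $C$ is reduced and separated and $\overline{M}_{\gamma,\nu}$ is separated, any extension of this map to all of $C$ is unique, so the construction does not depend on any choices made in Proposition \ref{prop:stable1}.

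There is no substantive obstacle here: the entire content is in Proposition \ref{prop:stable1}, and the corollary amounts to the observation that composing with the coarse space morphism $\overline{\calM}_{\gamma,\nu} \to \overline{M}_{\gamma,\nu}$ descends to the coarse space of the source by the universal property. The only point that deserves a line of explanation is the uniqueness of the descended map, which follows from separatedness together with density of the locus where the map is tautologically defined.
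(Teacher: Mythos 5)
Your proof is correct and follows exactly the route the paper intends: the corollary is stated as an immediate consequence of Proposition \ref{prop:stable1}, obtained by composing the morphism $(\calC,\Sigma_i^\calC)\to\overline{\calM}_{\gamma,\nu}$ with $\overline{\calM}_{\gamma,\nu}\to\overline{M}_{\gamma,\nu}$ and descending through the coarse space map $\calC\to C$ via its universal property. Your extra remarks on uniqueness via density of the stable locus and separatedness are a reasonable way of making the ``well defined'' claim precise, and nothing further is needed.
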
 

\begin{cor}\label{cor:proper} The space $\calF^v_{g,n}(\gamma, \nu)$ of slc surface fibrations is proper. \end{cor}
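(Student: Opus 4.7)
The plan is to verify the valuative criterion of properness for $\calF^v_{g,n}(\gamma,\nu)$. Finite type was established in Proposition \ref{prop:thm3}, and separatedness will follow from the separatedness of $\calK_{g,n}(\overline{\calM}_{\gamma,\nu})$ together with the uniqueness of the KSBA stable model, so the content lies in the existence of limits. Let $R$ be a DVR with fraction field $K$ and residue field $k$, and suppose we are given a family in $\calF^v_{g,n}(\gamma,\nu)$ over $\Spec K$. Since $\calF^v_{g,n}(\gamma,\nu)$ was defined as the closure of the locus parametrizing twisted fibered surfaces with smooth base and equidimensional $f$, after a finite base change on $R$ we may assume the generic family $(f_K : X_K \to C_K, S_K + F_K)$ lies in this open locus.

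First I would apply Proposition \ref{prop:stable1} to the generic fiber, producing a representable map $(\calC_K, \Sigma_i^{\calC_K}) \to \overline{\calM}_{\gamma,\nu}$ whose associated stack-like fibered surface has coarse space $(f_K : X_K \to C_K, S_K + F_K)$. This is a twisted stable map in the sense of Abramovich--Vistoli, so by properness of $\calK_{g,n}(\overline{\calM}_{\gamma,\nu})$, after a further finite base change on $R$ it extends uniquely to a family $(\calC, \Sigma_i^\calC) \to \overline{\calM}_{\gamma,\nu}$ over $\Spec R$. Pulling back the universal family and passing to coarse space, Corollary \ref{cor:coarsespace} produces a flat family $(f : X \to C, S + F)$ over $\Spec R$ of twisted fibered surface pairs extending the generic family. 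The central fiber $(X_0 \to C_0, S_0 + F_0)$ is then an slc twisted fibered surface pair, but the pair $(X_0, S_0 + F_0)$ is typically not stable in the absolute sense.

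To remedy this, I would next form the relative KSBA stable model
\[
X^{\mathrm{stab}} := \Proj_{\Spec R}\bigoplus_{m \geq 0} \pi_*\calO_X\bigl(m(K_{X/R} + S + F)\bigr),
\]
where $\pi : X \to \Spec R$ is the structure map. Since $(X_K, S_K + F_K)$ was already a stable pair, the generic fiber of $X^{\mathrm{stab}} \to \Spec R$ is unchanged, while the central fiber is the canonical stable model of $(X_0, S_0 + F_0)$. The key point, and the step I expect to be the main obstacle, is showing that this stable model still carries the structure of a stable slc fibration in the sense of Definition \ref{def:fibration}, so that it defines a point of $\calF^v_{g,n}(\gamma,\nu)$. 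This is where Proposition \ref{prop:basecurve} enters crucially: the intersection number $(K_X + S + F) \cdot S_\alpha = 2g(C_\alpha) - 2 + v_\alpha + n_\alpha$ for each component of the section shows that a component $X_\alpha \to C_\alpha$ of the central fiber is contracted by the stabilization if and only if $C_\alpha$ is an unstable component of $(C_0, \Sigma_i^{C_0})$ that must be contracted to stabilize the pointed nodal curve. Consequently the fibration $f_0$ descends to a map $f_0^{\mathrm{stab}} : X_0^{\mathrm{stab}} \to C_0^{\mathrm{stab}}$ onto the stabilization of $(C_0, \Sigma_i^{C_0})$, with contracted components showing up as surface pairs mapped to points, exactly as allowed in Definition \ref{def:fibration}. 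The resulting $(f^{\mathrm{stab}} : X^{\mathrm{stab}} \to C^{\mathrm{stab}}, S + F)$ is then the sought extension in $\calF^v_{g,n}(\gamma,\nu)$, and uniqueness of both the twisted stable map extension and the KSBA stable model provides separatedness.
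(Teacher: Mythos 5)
Your overall route is the same as the paper's: lift the generic family to a twisted stable map (Propositions \ref{prop:surj} and \ref{prop:stable1}), extend it over the DVR after a finite base change by properness of $\calK_{g,n}(\overline{\calM}_{\gamma,\nu})$, take coarse spaces to get a family of twisted fibered surface pairs, and then stabilize by the MMP, with Proposition \ref{prop:basecurve} controlling which section components are contracted. However, there is a genuine misstep in the stabilization step: the assertion that the central fiber of $X^{\mathrm{stab}} = \Proj_R\bigoplus_m \pi_*\calO_X\bigl(m(K_{X/R}+S+F)\bigr)$ ``is the canonical stable model of $(X_0,S_0+F_0)$'' is false in general. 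The pushforward of the log canonical algebra does not commute with base change when the special fiber is only relatively stable, and the special fiber of the relative log canonical model is typically a birational modification of, not equal to, the fiberwise stable model. This failure is precisely the content of La Nave's flips used in Theorem \ref{thm:elliptic}: contracting the section of a component over an unstable rational curve is a flipping contraction of the threefold whose flip blows up the attaching fiber on the neighboring component to an intermediate fiber, so the limit is a pseudoelliptic glued along an intermediate fiber, whereas the fiberwise stable model of $(X_0,S_0+F_0)$ would glue it along the original twisted or stable fiber; if your identification were correct, intermediate fibers would never appear in the boundary, contradicting Definition \ref{def:brokenelliptic}. Since your verification that the limit retains a fibration to the stabilized curve is carried out on this (wrong) fiberwise model, the argument as written does not establish that the actual limit is an slc fibration; one must instead argue on the total space, as the paper does, running the relative MMP for the lc pair $(X, X_0+S+F)$ over $R$ (each step an isomorphism on the generic fiber) and using Proposition \ref{prop:basecurve} together with the structure of the intermediate steps to see that the resulting stable family still maps to the stabilized family of curves, hence gives a point of the Hom stack $\mc{D}$ lying in the closure $\calF^v_{g,n}(\gamma,\nu)$.

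A smaller slip: ``after a finite base change on $R$ we may assume the generic family lies in this open locus'' is not a legitimate reduction, since no base change moves a $K$-point out of the boundary. The correct (standard) reduction is that, for a finite-type separated Deligne--Mumford stack defined as the closure of an open substack, it suffices to check the existence half of the valuative criterion for $K$-points of that dense open locus; this is also what the paper implicitly does by only treating families whose generic fiber is a twisted fibered surface with smooth base curve.
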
 

\begin{proof} Let $B$ be a smooth curve with closed point $p \in B$, let $B^0 = B \setminus p$, and let
$$
(X^0 \to C^0, S^0 + F^0) \to B^0
$$
be a family of twisted surfaces with smooth generic fiber $C_\eta \to \eta$. By Proposition \ref{prop:stable1}, this family is the coarse space of a family of twisted fibered surfaces $\calX^0 \to \calC^0 \to B^0$ induced by a morphism $(\calC^0, \Sigma_i^{\calC^0}) \to \overline{\calM}_{\gamma,\nu}$. By properness of the moduli space of twisted stable maps, there is a unique extension to a family of maps
$
(\calC', \Sigma_i^{\calC'}) \to \overline{\calM}_{\gamma, \nu}
$
after a finite basechange $B' \to B$ ramified at $p$. 

Let $\calX' \to \calC'$ be the pullback of the universal family. Then the coarse space $(X' \to C', S' + F')$ is a family of slc surfaces over $B'$ with stable and normal generic fiber. Running the MMP on the family $X' \to C'$, we obtain a unique stable model.  \end{proof}

\begin{remark} Corollary \ref{cor:proper} gives us a method for computing stable limits in $\calF_{g,n}(\gamma, \nu)$ using twisted stable maps. However, the surface pairs $(X, S + F)$ associated to an slc surface fibration $(f : X \to C, S + F)$ are stable in their own right. Thus, the stable limit in $\calF_{g,n}(\gamma, \nu)$ of a family of slc fibrations is also the stable limit of the family of surfaces in $\calA_v$ once we forget the map $f$. In particular, we see that the KSBA stable limit of a family of surfaces that admit a fibered surface structure is an slc fibration, a fact that is \emph{not} obvious \emph{a priori}. 
\end{remark}
\section{Elliptic surfaces from twisted stable maps}\label{sec:elliptic}

In this section we elucidate and apply the above results in the special case of elliptic surfaces, that is, $(\gamma, \nu) = (1,1)$. This expands on the work done by La Nave \cite{ln}, who studied the moduli spaces $\calF^v_{g,0}(1,1)$ and used twisted stable maps to explicitly compute the stable limits of families of elliptic surfaces in this space. 

The case of elliptic surfaces is much easier to work with by hand for several reasons. The first is that $\overline{M}_{1,1} \cong \mb{P}^1$ so that the underlying stable map of a twisted stable map is relatively easy to understand. Furthermore, the geometry of an elliptic surface is determined by two discrete invariants that are completely classified: the possible singular fibers classified by Kodaira and Ner\'on and the degree of the \emph{fundamental line bundle} $\mb{L}$ (see Chapter III of \cite{mir3}). We review these in the next subsection. 

\subsection{Elliptic surfaces} For $f : X \to C$ a smooth, relatively minimal elliptic surface with section $S$, there are finitely many singular fibers. These consist of configurations of rational curves with dual graph given by an affine Dynkin diagram. 

Table 15.1 in \cite{silverman} gives the full classification in Kodaira notation for the fiber as well as their monodromy. Fiber types $I_n$ for $n \geq 1$ are reduced and normal crossings, fibers of type $I^*_n, II^*, III^*,$ and $IV^*$ are normal crossings but nonreduced, and fibers  of type $II, III$ and $IV$ are reduced but not normal crossings. 

The configurations of these singular fibers are intimately related to the line bundle $\mb{L}$:

\begin{definition} The \textbf{fundamental line bundle} of a twisted elliptic surface $f : X \to C$ with section $S$ is $\mb{L}:= (f'_*N_{S'/X'})^{-1}$ where $(f' : X' \to C, S')$ is the minimal semi-resolution. \end{definition} 

Here $N_{S'/X'}$ is the normal bundle of the section in $X'$ which is well defined since $X'$ is semi-smooth so $S'$ passes through the smooth locus. It turns out that $\mb{L}$ is an effective line bundle on $C$ that is independent of the choice of section \cite{mir3}. In fact, $\mb{L}$ determines the canonical bundle of an irreducible elliptic surface (see \cite[Proposition III.1.1]{mir3} and the generalization \cite[Theorem 6.1]{calculations}). For example a normal elliptic surface is rational if and only if $\deg \mb{L} = 1$. 

Returning to the question of configurations of singular fibers, the number of singular fibers in a normal elliptic surface, counted appropriately, is equal to $12\deg(\mb{L})$. Here counted appropriately means that each singular fiber is weighted by the order of vanishing of the discriminant at the corresponding point in $p$. We can read this order of vanishing from the Kodaira fiber type. 

By understanding $\deg(\mb{L})$ and the configurations of singular fibers on the elliptic surface $f : X \to C$, one obtains a method to determine the degree of the corresponding twisted stable map. For example, for a generic elliptic surface all the singular fibers are nodal elliptic curves of type $I_1$. These each contribute $1$ to $12\deg(\mb{L})$ so there are exactly $12 \deg(\mb{L})$ of them. Therefore the coarse map $C \to \overline{M}_{1,1}$ is degree $12 \deg(\mb{L})$. 

\subsection{Local analysis of twisted fibers}\label{sec:subseclocal} Next we proceed with a local analysis to illustrate explicitly how each singular fiber of a twisted surface is the coarse space of a stack-like surface. 

Suppose that $(f: X \to C , S + F)$ is the stable model of an elliptic surface with section over $C$, a DVR with marked central fiber. That is,  start with a twisted elliptic surface over the spectrum of a DVR. Theorem 1.1 of \cite{calculations} describes what the central fiber of the twisted surface $(f : X \to C, S + F)$ looks like, it has:
\begin{enumerate} \item a single irreducible component that is either a stable elliptic curve meeting the section at a smooth point, or 
\item an irreducible non-reduced curve with support $\mb{P}^1$ meeting the section at a singular point of the total space $X$. \end{enumerate}

We recall some results on the local singularities found for pairs $(X, S + F)$ in \cite{calculations}.

\begin{table}[!htb]\label{sing}
    \caption{Singularities of $II, III, IV$ and $II^*, III^*, IV^*$ fiber types}
    \begin{minipage}{.5\linewidth}
      \centering
    \begin{tabular}{|l|l|}
\hline
			&  $a = 1$				\\ \hline
$II^*$		&  $A_1$, $A_2$, $A_5$	\\ \hline	
$III^*$		&  $A_1$, $2A_3$			\\ \hline	
$IV^*$		&  $3A_2$				\\ \hline

\end{tabular}
    \end{minipage}%
    \begin{minipage}{.5\linewidth}
      \centering
\begin{tabular}{|l|l|}
\hline
			&  $a = 1$					\\ \hline
$II$		&  $A^*_1$, $A^*_2$, $A^*_5$	\\ \hline	
$III$		&  $A^*_1$, $2A^*_3$			\\ \hline	
$IV$		& $3A^*_2$					\\ \hline	

\end{tabular}
    \end{minipage} 
\end{table}

Here $A^*_{n-1}$ denotes the singularity obtained by contracting a rational $(-n)$ curve in a smooth surface. By stable reduction for families of curves, the central fiber can be filled in by a stable curve after a ramified base change of $C$.

To see this explicitly, note that the fibers have quasi-unipotent monodromy group with cyclic torsion subgroup. Then we can take a cyclic $\Z/n\Z$ cover $C' \to C$ of the base ramified at the closed point. Consider the normalization of the pullback: 
$$
\xymatrix{\widetilde{X}' \ar[r] \ar[d] & X \ar[d] \\ C' \ar[r] & C}.
$$
Here $\widetilde{X}' \to C'$ is an elliptic fibration with trivial or unipontent monodromy. Therefore the central fiber is a stable elliptic curve. We have that $(\widetilde{X}', S' + F')$ is an slc pair where $S'$ is the section, and by construction $(\widetilde{X}' \to C', S' + F')/G = (X \to C, S + F)$ where $G \cong \Z/n\Z$ is the finite part of the monodromy. 

The monodromy group determines the fiber type of $F$ and the singularities of $X$ along $F$. Indeed the Kodaira fibers are paired based on dual monodromies with $I_n$ and $I_n^*$ self dual, and $II, III, IV$ dual to $II^*, III^*, IV^*$ respectively. This explains the appearance of dual singularities $A_n$ and $A_n^*$ in the third columns of the tables for the singularities of the stable models (see Table 1). Indeed $II, III, IV$ and $II^*, III^*, IV^*$ have potentially good reduction so that $\widetilde{X}' \to C'$ above is a smooth morphism in these cases. Therefore the stable models for these fibers are the quotients of a smooth family by dual group actions and so must have dual quotient singularities. 

Now we can take the stack quotient $\calX := [\widetilde{X}'/G] \to \calC: = [C'/G]$ to obtain a stack-like fibered surface. These local models provide charts for the global surface obtained in the construction in Proposition \ref{prop:surj} of a global stack-like elliptic surface whose coarse space is a given twisted elliptic surface.

\subsection{Explicit stable reduction for elliptic surfaces} 

Finally we end with a discussion of the surfaces that appear on the boundary of $\calF_{g,n}(1,1)$, by explicitly carrying out the process described in Corollary \ref{cor:proper}. This is a direct application of the work of La Nave \cite{ln} in the case $n = 0$. In contrast to \cite{ln}, we study what happens as we keep the marked fibers with weight 1 throughout. 

\begin{definition}[See \cite{calculations}]\label{def:fibers} Let $(f : X \to C, S + F)$ be an elliptic surface over the spectrum of a DVR. We say that $f$ has 
\begin{enumerate}[label = (\alph*)]
\item a \textbf{twisted fiber} if the surface is twisted with central fiber not stable,
\item an \textbf{intermediate fiber} if the  central fiber is the blowup of a twisted or stable fiber at the point where the fiber meets the section (see Definition 4.9 of \cite{calculations}), and
\item a \textbf{Weierstrass fiber} if the central fiber is reduced and irreducible. \end{enumerate}  \end{definition}

Note that by the results of \cite{calculations}: 
\begin{enumerate} \item[(a)] the twisted fibers are irreducible non-reduced curves of arithmetic genus $1$ supported on a rational curve, 
\item[(b)] the intermediate fibers consist of the nodal union of two irreducible components $A$ and $E$, where $E$ supports either a stable genus $1$ curve or a non-reduced arithmetic genus $1$ curve, and $A$ is a reduced rational curve.
\item[(c)] Weierstrass fibers are reduced and irreducible curves of genus $1$, and therefore they are either smooth elliptic curves, nodal elliptic curves, or cuspidal cubics. \end{enumerate}

From Proposition \ref{prop:basecurve}, we know the the section of a component of a twisted slc elliptic surface may be contracted if the component is fibered over $\mb{P}^1$ with no marked fibers and  attached along one fiber. This motivates the following: 

\begin{definition}\label{def:pseudoelliptic} A \textbf{pseudoelliptic surface} is an irreducible surface $Z$ obtained by contracting the section of an elliptic surface $(f : X \to C, S)$. \end{definition} 

We are now ready to describe the surfaces that appear on the boundary of $\calF_{g,n}(1,1)$. 

\begin{definition}\label{def:brokenelliptic} A \textbf{broken elliptic surface pair} $(f: X \to C, S + F)$ is an slc elliptic fibration as in Definition \ref{def:fibration}, such that
\begin{enumerate}[label = (\alph*)]
\item $X$ consists of an slc union of elliptic surface components with all fibers as in Definition \ref{def:fibers} and pseudoelliptic surfaces associated to such elliptic surfaces which are contracted by $f$;
\item the elliptic components of $X$ are glued along twisted or stable fibers;
\item the pseudoelliptic components are of one of the following two types: 
	\begin{enumerate}[label = (\roman*)]
		\item  trees of \textbf{Type I} pseudoelliptic components constructed inductively by gluing a twisted or stable fiber of a pseudoelliptic onto the arithmetic genus $1$ component of an intermediate fiber;
		\item \textbf{Type II} pseudoelliptic components attached along precisely two irreducible (twisted or stable) fibers. 
	\end{enumerate}
\end{enumerate}

We say $(f : X \to C, S +F )$ is \textbf{stable} if $(X, S + F)$ is a stable pair. 

\end{definition}  

\begin{figure}[h!]
\centering
\includegraphics{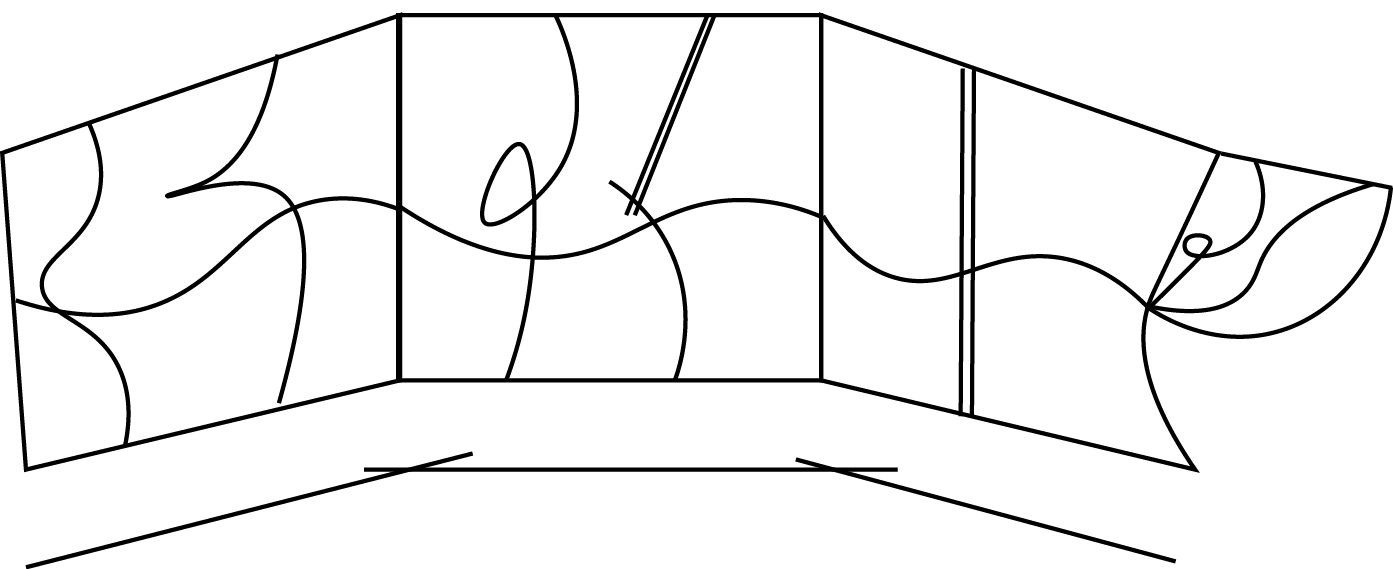}
\caption{A broken elliptic surface pair.}
\end{figure}

\begin{theorem}\label{thm:elliptic} Let $(X^0 \to C^0, S^0 + F^0) \to B^0$ be a flat family of twisted stable elliptic surface pairs over a smooth punctured curve $B^0 = B \setminus p$. Then after a finite base change, the central fiber can be filled in uniquely by a stable broken elliptic surface pair. 

\end{theorem}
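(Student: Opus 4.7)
The plan is to combine the abstract stable reduction machinery of Corollary~\ref{cor:proper} with the explicit classification of singular fibers from Section~\ref{sec:subseclocal} and the analysis of MMP steps carried out by La Nave~\cite{ln}. First, by Proposition~\ref{prop:stable1} the family $(X^0 \to C^0, S^0 + F^0) \to B^0$ is the coarse space of a family of stack-like fibered elliptic surfaces $\calX^0 \to \calC^0 \to B^0$ induced by a morphism $(\calC^0, \Sigma_i^{\calC^0}) \to \overline{\calM}_{1,1}$. By properness of $\calK_{g,n}(\overline{\calM}_{1,1})$, after a finite base change $B' \to B$ ramified at $p$, this morphism extends uniquely to a twisted stable map $(\calC', \Sigma_i^{\calC'}) \to \overline{\calM}_{1,1}$. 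Pulling back the universal family and taking coarse moduli space produces a family $(X' \to C', S' + F') \to B'$ of slc fibrations with stable and normal generic fiber, whose central fiber we denote $(f_0 : X'_0 \to C'_0, S'_0 + F'_0)$.

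Next, I would run the MMP on the pair $(X', S' + F')$ relative to $B'$ to obtain the stable model $(X \to C, S + F) \to B'$; this is well defined and unique since the generic fiber is a stable pair. The content of the theorem is to identify the central fiber $(X_0, S_0 + F_0)$ as a stable broken elliptic surface in the sense of Definition~\ref{def:brokenelliptic}. To do this, I first analyze $(X'_0, S'_0 + F'_0)$ \'etale-locally around each node of $C'_0$: the local models of Section~\ref{sec:subseclocal} show that each component $X'_\alpha$ is a twisted elliptic surface whose fibers over the nodes of $C'_0$ and over the marked points (lying under $F'$) are of Weierstrass, intermediate, or twisted type as classified in Definition~\ref{def:fibers}. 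Thus $(X'_0, S'_0 + F'_0)$ is an slc elliptic fibration in the sense of Definition~\ref{def:fibration}, with elliptic components glued along fibers.

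Now I apply the step-by-step analysis of \cite{ln}, which describes precisely how each MMP operation modifies the central fiber of the family of coarse spaces coming from a family of twisted stable maps. The non-trivial log canonical contractions either (i) contract an arithmetic genus one curve meeting the section at a node of $X'_0$, turning a gluing along an intermediate fiber into a gluing along a twisted or stable fiber, or (ii) contract the section $S'_\alpha$ of a component $X'_\alpha$ when Proposition~\ref{prop:basecurve} forces $2g(C'_\alpha) - 2 + v_\alpha + n_\alpha \leq 0$, that is, when $C'_\alpha \cong \mb{P}^1$ carries at most two special points coming from nodes of $C'_0$ and marked points. The first operation produces the gluing structure required in (b); the second operation produces the pseudoelliptic components of Definition~\ref{def:pseudoelliptic}, classified into Type I (when the component is attached along exactly one fiber, which, upon contracting sections recursively up the tree, builds the trees described in (c)(i)) and Type II (when the component is attached along exactly two fibers, as in (c)(ii)).

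The main obstacle is the bookkeeping for item~(c): one must check that after each contraction of a section in the MMP, the resulting pseudoelliptic is glued into the remaining surface in one of the two prescribed ways and that inductively collapsing such sections up the dual graph of $C'_0$ produces exactly trees of Type I components attached to intermediate fibers, plus isolated Type II components attached between two fibers. This combinatorial compatibility is precisely what La Nave's analysis provides after adapting it to keep track of the marked fibers $F$ with coefficient~$1$, so that the fibers in $F$ are never contracted and $(C, \Sigma^C)$ remains a stable pointed curve by Proposition~\ref{prop:basecurve}. Uniqueness of the filling follows from uniqueness of the stable model once the family of twisted stable maps has been extended, and the stability of $(X, S + F)$ is automatic from the MMP output.
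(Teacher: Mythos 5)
Your overall strategy is the same as the paper's: lift the family to stack-like surfaces via Proposition \ref{prop:stable1}, extend the twisted stable map after a finite base change by properness of $\calK_{g,n}(\overline{\calM}_{1,1})$, take coarse spaces, and then run the MMP on the total space, using La Nave's analysis of how each step affects the central fiber to identify the limit as a broken elliptic surface. However, two points in your description of the MMP stage are off. First, the central fiber of the coarse family \emph{before} running the MMP is a twisted elliptic surface: it is equidimensional and relatively stable, with every fiber either stable or twisted. Intermediate and Weierstrass fibers do not occur at this stage, so your claim that the components of $(X'_0, S'_0+F'_0)$ already exhibit Weierstrass or intermediate fibers, and your operation (i) --- contracting a genus one curve so as to turn a gluing along an intermediate fiber into a gluing along a twisted or stable fiber --- runs in the wrong direction. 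What actually happens (by \cite[Theorem 7.1.2]{ln}) is the reverse: contracting the section of a component $Z$ attached along a single twisted or stable fiber $G$ is a log \emph{flip} of the total space, and the flip blows up $G$ on the neighboring component into an intermediate fiber; this is precisely why the Type I pseudoelliptic trees of Definition \ref{def:brokenelliptic}(c)(i) are attached along intermediate fibers. As written, your operation (i) would eliminate exactly the intermediate fibers that the definition requires.

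Second, you omit the final step of the paper's argument: after the flips and the log canonical contractions producing Type II pseudoelliptics, a rational pseudoelliptic component may be further contracted by the log canonical linear series, either onto the intermediate fiber it is attached along or to a point; in the latter case the genus one component $E$ of the intermediate fiber is contracted, producing a Weierstrass fiber (this is \cite[Proposition 7.4 and Section 8.3]{calculations}). Without accounting for these contractions you cannot conclude that the end result of the MMP is still of the form prescribed by Definition \ref{def:brokenelliptic} (this is where Weierstrass fibers legitimately enter, via Definition \ref{def:fibers}). Your appeal to ``adapting La Nave's bookkeeping to marked fibers with coefficient $1$'' correctly identifies where the work lies, but the two issues above need to be corrected for the classification of boundary components to come out as stated.
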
 

\begin{proof} Proceeding as in the proof of Corollary \ref{cor:proper}, we lift this family of twisted surfaces to a family of stack-like fibered surfaces $\calX^0 \to \calC^0 \to B^0$. Replacing $B$ with a finite base change, we have a unique way to extend the family of twisted stable maps to $(\calC, \Sigma_i^\calC) \to \overline{\calM}_{\gamma, \nu}$. The central fiber of the family of coarse surfaces is the unique limit in the space $\calF_{g,n}(\overline{M}_{\gamma,\nu})$ of Alexeev stable maps. 

Denote this family of coarse surfaces by $(X \to C, S + F) \to B$. Then the central fiber, denoted by $(f_0 : X_0 \to C_0, S_0 + F_0)$ is a twisted elliptic surface. In particular, it is equidimensional, relatively stable, with each fiber either stable or twisted. Then running the MMP on the total space of this family, we must contract any components of $C_0$ that are not stable, as well as the components of the section lying above them.

Suppose $Z$ is such a component of the central fiber attached to the rest of the central fiber $Y$ along a single twisted or stable fiber $G$. By \cite[Theorem 7.1.2]{ln}, the contraction of the section of $Z$ is a log flipping contraction of the total space $X$, whose flip is the blowup on $Y$ of the fiber $G$ to an intermediate fiber. Thus any trees of components $Z$ fibered over rational curves with no marked fibers result in trees of Type I pseudoelliptic components attached along intermediate fibers. 

After this sequence of flips, we are left with the prestable components of $C_0$, that is, the $(K_X + S + F)$-trivial components of the section. Such a component of the section lies on an elliptic components $Z$ of the central fiber over a rational component of $C_0$ which is attached along two twisted fibers. The contraction of such a section component is a log canonical contraction leading to a Type II pseudoelliptic attached along two twisted fibers. 

Finally, by \cite[Proposition 7.4 and Section 8.3]{calculations}, a rational pseudoelliptic component may be contracted by the log canonical linear series either onto the intermediate fiber it is attached along, or onto a point. The latter case results in the contraction of the component $E$ of the intermediate fiber leading to a Weierstrass fiber. \end{proof}

\bibliographystyle{alpha}
\bibliography{master}

\newcommand{\etalchar}[1]{$^{#1}$}
\begin{thebibliography}{ACG{\etalchar{+}}13}

\bibitem[AB]{kennydori1}
Kenneth Ascher and Dori Bejleri.
\newblock Moduli of weighted stable elliptic surfaces \& invariance of log
  plurigenera.
\newblock {\em In Preparation}.

\bibitem[AB16]{calculations}
Kenneth Ascher and Dori Bejleri.
\newblock Log canonical models of elliptic surfaces.
\newblock {\em arXiv:1610.08033}, 2016.

\bibitem[{Abr}02]{abramovich}
D.~{Abramovich}.
\newblock {Canonical models and stable reduction for plurifibered varieties}.
\newblock {\em ArXiv Mathematics e-prints}, June 2002.

\bibitem[ACG{\etalchar{+}}13]{loggeom}
Dan Abramovich, Qile Chen, Danny Gillam, Yuhao Huang, Martin Olsson, Matthew
  Satriano, and Shenghao Sun.
\newblock Logarithmic geometry and moduli.
\newblock In {\em Handbook of moduli. {V}ol. {I}}, volume~24 of {\em Adv. Lect.
  Math. (ALM)}, pages 1--61. Int. Press, Somerville, MA, 2013.

\bibitem[AG08]{alexeevguy}
Valery Alexeev and G.~Michael Guy.
\newblock Moduli of weighted stable maps and their gravitational descendants.
\newblock {\em J. Inst. Math. Jussieu}, 7(3):425--456, 2008.

\bibitem[Ale96]{mgnw}
Valery Alexeev.
\newblock Moduli spaces {$M_{g,n}(W)$} for surfaces.
\newblock In {\em Higher-dimensional complex varieties ({T}rento, 1994)}, pages
  1--22. de Gruyter, Berlin, 1996.

\bibitem[AOV11]{aov}
Dan Abramovich, Martin Olsson, and Angelo Vistoli.
\newblock Twisted stable maps to tame {A}rtin stacks.
\newblock {\em J. Algebraic Geom.}, 20(3):399--477, 2011.

\bibitem[AV97]{av}
Dan Abramovich and Angelo Vistoli.
\newblock Complete moduli for fibered surfaces.
\newblock Recent progress in intersection theory (Bologna, 1997), 1997.

\bibitem[AV02]{av2}
Dan Abramovich and Angelo Vistoli.
\newblock Compactifying the space of stable maps.
\newblock {\em J. Amer. Math. Soc.}, 15(1):27--75, 2002.

\bibitem[Bru15]{brunyate}
Adrian Brunyate.
\newblock {\em A Modular Compactification of the Space of Elliptic K3
  Surfaces}.
\newblock PhD thesis, The University of Georgia, 2015.

\bibitem[Kat00]{fkato}
Fumiharu Kato.
\newblock Log smooth deformation and moduli of log smooth curves.
\newblock {\em Internat. J. Math.}, 11(2):215--232, 2000.

\bibitem[LN]{ln}
Gabrielle La~Nave.
\newblock Explicit stable models of elliptic surfaces with sections.
\newblock {\em arXiv: 0205035}.

\bibitem[Mir89]{mir3}
Rick Miranda.
\newblock {\em The basic theory of elliptic surfaces}.
\newblock Dottorato di Ricerca in Matematica. [Doctorate in Mathematical
  Research]. ETS Editrice, Pisa, 1989.

\bibitem[Ols07]{olsson}
Martin~C. Olsson.
\newblock ({L}og) twisted curves.
\newblock {\em Compos. Math.}, 143(2):476--494, 2007.

\bibitem[Pat16]{fsv}
Zsolt Patakfalvi.
\newblock Fibered stable varieties.
\newblock {\em Trans. Amer. Math. Soc.}, 368(3):1837--1869, 2016.

\bibitem[Sil09]{silverman}
Joseph~H. Silverman.
\newblock {\em The arithmetic of elliptic curves}, volume 106 of {\em Graduate
  Texts in Mathematics}.
\newblock Springer, Dordrecht, second edition, 2009.

\end{thebibliography}

\end{document}